\numberwithin{equation}{section}
\newtheorem{theorem}{Theorem}[section]
\newtheorem{lemma}[theorem]{Lemma}
\newtheorem{corollary}[theorem]{Corollary}
\newtheorem{proposition}[theorem]{Proposition}
\theoremstyle{definition}
\newtheorem{definition}[theorem]{Definition}
\newtheorem{remark}[theorem]{Remark}
\theoremstyle{notation}
\DeclareMathOperator*{\esssup}{ess\,sup}
\DeclareMathOperator*{\essinf}{ess\,inf}
\newcommand{\fourier}{\mathscr{F}}
\newcommand{\R}{\mathds{R}}
\newcommand{\sph}{\mathds{S}}
\newcommand{\N}{\mathds{N}}
\renewcommand{\le}{\leqslant}
\renewcommand{\ge}{\geqslant}
\newenvironment{PDE}
	{ \left \{
	\begin{array}{r@{ \ }l @{\quad \: \;} l}
	}
	{
	\end{array} \right . 
	}
\newcommand{\jj}{{\mathcal{K}}}
\definecolor{RED}{rgb}{1,0,0}
\begin{document}

\date{\today}

\title[The nonlocal Harnack inequality for antisymmetric solutions]{The nonlocal Harnack inequality \\ for antisymmetric solutions: \\
an approach via Bochner's relation and harmonic analysis}

\author[S. Dipierro, M.~Kwa\'snicki, J. Thompson, E. Valdinoci]{Serena Dipierro,
Mateusz Kwa\'snicki, Jack Thompson, Enrico Valdinoci}
\address{Serena Dipierro: Department of Mathematics and Statistics,
The University of Western Australia, 35 Stirling Highway, Crawley, Perth, WA 6009, Australia.}
\email{serena.dipierro@uwa.edu.au}
\address{Mateusz Kwa\'snicki: Department of Pure Mathematics, Wroc\l{}aw University of Science and Technology, Wyb.\@ Wyspia\'nskiego 27, 50-370 Wroc\l{}aw, Poland.}
\email{mateusz.kwasnicki@pwr.edu.pl}
\address{Jack Thompson: Department of Mathematics and Statistics,
The University of Western Australia, 35 Stirling Highway, Crawley, Perth, WA 6009, Australia.}
\email{jack.thompson@research.uwa.edu.au}
\address{Enrico Valdinoci: Department of Mathematics and Statistics, The University of Western Australia, 35 Stirling Highway, Crawley, Perth, WA 6009, Australia.}
\email{enrico.valdinoci@uwa.edu.au}

\begin{abstract} We revisit a Harnack inequality for antisymmetric functions
that has been recently established for the fractional Laplacian and we extend it to
more general nonlocal elliptic operators.

The new approach to deal with these problems that we propose in this paper
leverages Bochner's relation, allowing one to
relate a one-dimensional Fourier transform of an odd function with a three-dimensional Fourier transform of a radial function.

In this way, Harnack inequalities for odd functions, which are essentially 
Harnack inequalities of boundary type, are reduced to interior Harnack inequalities.
\end{abstract}

\maketitle

%
%

\section{Introduction}

One of the characterizing properties of classical harmonic functions is their ``rigidity'': in spite of the fact that harmonic functions may exhibit, in general, different patterns, a common feature is that if, at a given point, a harmonic function ``bends up'' in a certain direction, then necessarily it has to ``bend down'' in another direction. This observation is typically formalised by the so-called maximum principle. Furthermore, the maximum principle is, in turn, quantified by the Harnack inequality~\cite{HAR} which asserts that the values of a positive harmonic function in a given ball are comparable (see~\cite{MK} for a thorough introduction to the topic).

The growing interest recently surged in 
the study of nonlocal and fractional equations, especially in relation to fractional powers of the Laplace operator,
has stimulated an intense investigation on the possible validity of Harnack-type inequalities in a nonlocal framework.
Several versions of the Harnack inequality have been obtained for the fractional Laplacian as well as for more general nonlocal operators, see~\cite{BASS02, BOG03, SON04, BASS05, SIL06, CS07, STI10, DICA14, FER15}.\medskip

A striking difference between the classical and the fractional settings is that in the former
a sign assumption on the solution is taken only in the region of interest, while in the
latter such an assumption is known to be insufficient. Indeed, counterexamples to the validity of the nonlocal
Harnack inequality in the absence of a global sign assumptions have been put forth in~\cite{MK7} (see also~\cite[Theorem~3.3.1]{BUC} and~\cite[Theorem~2.2]{MK11}). In fact,
these counterexamples are just particular cases
of the significant effect that the faraway oscillations may produce
on the local patterns of solutions to nonlocal equations,
see~\cite{DSV, DSV2}
(in case of solutions which change sign, the nonlocal framework however allows
for a Harnack inequality with a suitable integral remainder, see~\cite[Theorem~2.3]{MK11}).\medskip

A rather prototypical situation in which the sign condition is violated is that of odd antisymmetric functions. 
Specifically, we consider here the case in which
antisymmetry occurs with respect to the first variable
(or, more precisely,
with respect to a hyperplane, which can be always assumed, without loss of generality, to be~$ \{x_1 = 0\}$; a detailed discussion will be presented in Section~\ref{sec:bochner}).

In the nonlocal world, 
antisymmetry situations with respect to a certain variable
frequently occur, especially when dealing with moving planes and reflection methods. On these occasions, the lack of sign assumption needs to be replaced by bespoke maximum principles which carefully take into account the additional antisymmetrical structure of the problem under consideration, see~\cite{FALL15, SV16, CO20, FEL20, FEL21, TR24}.
In particular, in the antisymmetric setting, the following
nonlocal Harnack inequality has been established in~\cite{dtv}:

\begin{theorem}
\label{thm:odd:harnack}
Suppose that $\Omega$ is an open set of~$\R^n$,
and let~$u$ be a function on $\R^n$ such that $u$ is $C^{2s + \delta}$ in $\Omega$ and antisymmetric with respect to the hyperplane~$\{x_1=0\}$,
with
\begin{equation*}
\int_{\R^n}\frac{|x_1|\,|  u(x)|}{(1 + |x|)^{n + 2 +2s}}\, dx<+\infty
\end{equation*}
and~$x_1 u(x) \ge 0$ for every $x \in \R^n$.

Let $c$ be a bounded Borel function on $\Omega$. If
\[
 (-\Delta)^s u + c u = 0
\]
in $\Omega_+ = \{x \in \Omega : x_1 > 0\}$, then for every compact subset $K$ of $\Omega$ we have
\begin{equation}
\label{eq:odd:harnack}
 \sup_{x \in K_0} \frac{u(x)}{x_1} \le C(K, \Omega, \|c\|_{L^\infty(\Omega_+)}) \inf_{x \in K_0} \frac{u(x)}{x_1} ,
\end{equation}
where $K_0 = \{x \in K : x_1 \ne 0\}$.
\end{theorem}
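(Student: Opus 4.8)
The plan is to use Bochner's relation to lift the antisymmetric solution $u$ on $\R^n$ to a \emph{nonnegative} solution $U$ on $\R^{n+2}$ of a nonlocal equation with bounded zeroth-order coefficient, and then to invoke the classical \emph{interior} nonlocal Harnack inequality in dimension $n+2$. Throughout write $x=(x_1,x')\in\R\times\R^{n-1}$ and $(z,x')\in\R^3\times\R^{n-1}=\R^{n+2}$.

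First I would construct the lift. Since $u$ is antisymmetric in $x_1$ and of class $C^{2s+\delta}$ in $\Omega$, the quotient $\ph(x_1,x'):=u(x_1,x')/x_1$ extends to a function which is even in $x_1$, is $C^{2s+\delta}$ near each point of $\Omega$ with $x_1\ne0$, and --- by the sign hypothesis $x_1u\ge0$ --- satisfies $\ph\ge0$ wherever it is defined. Set $U(z,x'):=\ph(|z|,x')$ on $\R^{n+2}$: this is well defined ($\ph$ is even in its first argument), it is nonnegative on \emph{all} of $\R^{n+2}$, and it is $C^{2s+\delta}$ on the open set
\[
\widetilde\Omega_+:=\bigl\{(z,x')\in\R^{n+2}:(|z|,x')\in\Omega_+\bigr\},
\]
since $|z|>0$ there makes $z\mapsto|z|$ smooth. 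Passing to polar coordinates in $z$ shows that the weighted tail bound assumed on $u$, with weight $|x_1|$ and exponent $n+2+2s$, is \emph{exactly} the condition $\int_{\R^{n+2}}|U(y)|(1+|y|)^{-(n+2)-2s}\,dy<+\infty$; this is the bookkeeping that makes the shift $n\mapsto n+2$ the correct one, and it guarantees that $(-\Delta)^s_{\R^{n+2}}U$ is classically defined on $\widetilde\Omega_+$.

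Next I would use Bochner's relation to transfer the equation. The heart of the matter is that this lift intertwines the fractional Laplacians: for $u$ antisymmetric in the above class, $(-\Delta)^s u$ is again antisymmetric (as $(-\Delta)^s$ commutes with the reflection in $\{x_1=0\}$), and, writing $(-\Delta)^s_{\R^n}u(x_1,x')=x_1\,\Phi(x_1,x')$, one has $(-\Delta)^s_{\R^{n+2}}U(z,x')=\Phi(|z|,x')$. After a partial Fourier transform in $x'$, this is Bochner's relation in the variable $x_1$: the one-dimensional Fourier transform of the odd function $x_1\mapsto x_1\ph(x_1,x')$ and the three-dimensional Fourier transform of the radial function $z\mapsto\ph(|z|,x')$ carry the same radial profile, and the symbol $\xi\mapsto|\xi|^{2s}$ is radial, hence compatible with this correspondence. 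Granting the intertwining, divide $(-\Delta)^s u+cu=0$ in $\Omega_+$ by $x_1\ne0$ to get $\Phi=-c\,\ph$ on $\Omega_+$, and therefore
\[
(-\Delta)^s_{\R^{n+2}}U+\widetilde c\,U=0\quad\text{in }\widetilde\Omega_+,\qquad\widetilde c(z,x'):=c(|z|,x'),
\]
with $\widetilde c$ a bounded Borel function and $\|\widetilde c\|_{L^\infty(\widetilde\Omega_+)}=\|c\|_{L^\infty(\Omega_+)}$.

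Finally I would apply the interior Harnack inequality and translate back. Now $U\ge0$ on all of $\R^{n+2}$ and solves a nonlocal equation with bounded zeroth-order term on the open set $\widetilde\Omega_+$, so by the classical interior Harnack inequality for nonnegative solutions of such equations (absorb $\widetilde c\,U$ as an $L^\infty$ right-hand side in the ball version for $(-\Delta)^s$ and chain along a Harnack chain), for every compact $\widetilde K\subset\widetilde\Omega_+$ one has $\sup_{\widetilde K}U\le C\,\inf_{\widetilde K}U$ with $C$ depending only on $n$, $s$, $\|c\|_{L^\infty(\Omega_+)}$ and the pair $(\widetilde K,\widetilde\Omega_+)$. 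Since $u/x_1$ is even in $x_1$, the set of values $\{u(x)/x_1:x\in K_0\}$ equals $\{\ph(t,x'):(t,x')\in L\}$, where $L:=\{(|x_1|,x'):(x_1,x')\in K_0\}$ is a compact subset of $\Omega_+$ (for the points of $K_0$ with $x_1<0$ this is where one uses the evenness together with the invariance of the hypotheses under the reflection $x\mapsto x^\ast$); hence $\widetilde K:=\{(z,x'):(|z|,x')\in L\}$ is a compact subset of $\widetilde\Omega_+$ on which $U$ realizes precisely the values $u(x)/x_1$, $x\in K_0$, and the Harnack inequality for $U$ on $\widetilde K$ is exactly \eqref{eq:odd:harnack}, with constant $C(K,\Omega,\|c\|_{L^\infty(\Omega_+)})$. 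The main obstacle I anticipate is the intertwining step: Bochner's relation is a one-line Fourier identity for Schwartz functions, but here it must be established for functions that are merely locally $C^{2s+\delta}$ with a weighted-integrable tail, so that both $(-\Delta)^s_{\R^n}u$ and $(-\Delta)^s_{\R^{n+2}}U$ make classical pointwise sense and the Fourier-side identity can be transferred by approximation; the lift, the sign and regularity transfer, and the appeal to the standard nonlocal Harnack inequality are then routine.
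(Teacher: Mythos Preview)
Your overall strategy---lift $u$ to a nonnegative function $U$ on $\R^{n+2}$ via Bochner's relation, transfer the equation, and apply the interior Harnack inequality---is exactly the paper's approach. However, there is a genuine gap in the final step.

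You assert that $L:=\{(|x_1|,x'):(x_1,x')\in K_0\}$ is a compact subset of $\Omega_+$, and hence that $\widetilde K$ is a compact subset of $\widetilde\Omega_+$. This is false precisely in the case that matters. If $K$ meets the hyperplane $\{x_1=0\}$ (say $K=\overline{B_{1/2}}\subset\Omega=B_1$), then $K_0=K\setminus\{x_1=0\}$ is not closed, and $L$ accumulates at points $(0,x')\notin\Omega_+$; consequently $\widetilde K$ accumulates on the spine $\{z=0\}$, which lies outside $\widetilde\Omega_+$. Since the whole point of the antisymmetric Harnack inequality is to control $u(x)/x_1$ uniformly as $x_1\to0$, this is not a fringe case---it is the heart of the theorem. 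With only the pointwise equation on $\widetilde\Omega_+$ and the interior Harnack inequality there, you cannot reach such $\widetilde K$.

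The paper repairs this as follows. Having established the pointwise identity $(-\Delta)^sU+\widetilde c\,U=0$ on $\widetilde\Omega_+$ (your $\widetilde\Omega_+$ is the paper's $\tilde\Omega_0$), it proves---via a duality argument based again on Bochner's relation (item~(e) of Theorem~\ref{thm:odd:flap})---that the equation holds in the \emph{weak} sense on the larger open set $\tilde\Omega=\{(z,x'):(|z|,x')\in\Omega\}$, which contains the spine. The set $\tilde K=\{(z,x'):(|z|,x')\in K\}$ is then genuinely compact in $\tilde\Omega$, and the standard Harnack inequality for weak solutions applies. You should either add this weak extension across the spine, or supply a removable-singularity argument (the spine has codimension $3>2s$ in $\R^{n+2}$); without one of these, the proof does not close.
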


In this article, we revisit the nonlocal Harnack inequality under a different perspective,
providing a new, shorter proof of Theorem~\ref{thm:odd:harnack}. This proof relies
on the so-called Bochner's relation (or Hecke--Bochner identity), which links a one-dimensional Fourier transform of an odd function with a three-dimensional Fourier transform of a radial function: roughly speaking, for any function~$ f$ on the positive reals (under a natural integrability condition), if~$F(|Z|)$ denotes the \mbox{3-D} Fourier transform of the 3-D radial function~$f(|X|)$, then the 1-D Fourier transform of the \mbox{1-D} function~$x f(|x|)$ is equal to~$i z F(|z|)$. The complete statement of Bochner's relation is actually even more general (it works in higher dimensions too, and the variable~$x$ can be replaced by a homogeneous harmonic polynomial), see~\cite[page~72]{stein}.

The gist of the argument that we present here is that
Bochner's relation extends to the case of antisymmetric functions dealt with in
Theorem~\ref{thm:odd:harnack}. More specifically,
the Fourier transform of an \(n\)-dimensional antisymmetric function~$ x_1 f(|x_1|, x')$ is equal to~$i z_1 F(|z_1|, z')$, where~$F (|Z|, z')$ is the $(n + 2)$-dimensional Fourier transform of~$f(|X|, x')$, where~$X$ and~$ Z$ are 3-dimensional vectors, $x_1$ and~$z_1$ are real numbers, and~$x'$ and~$z'$ are~$(n - 1)$-dimensional vectors.

Using the above property, one finds that the $n$-dimensional fractional Laplacian applied to~$x_1 f(|x_1|, x')$ is equal to~$x_1 g(|x_1|, x')$ if and only if the~$(n + 2)$-dimensional fractional Laplacian applied to~$f(|X|, x')$ is equal to~$ g(|X|, x')$ (in fact, this is true for an arbitrary function of the Laplacian, not just for its fractional powers).

This, in turn, enables one to apply the usual Harnack inequality in dimension~$n + 2$ to deduce a Harnack inequality (or, in a sense, a ``boundary'' Harnack inequality)
in dimension~$ n$ for antisymmetric functions. In this framework,
Theorem~\ref{thm:odd:harnack} follows from a more classical result for positive solutions in dimension~$n + 2$.\medskip

For other applications of
Bochner's relation to antisymmetric functions, see~\cite[Theorem~1]{DYDA},
\cite[Theorem~1.5 and Proposition~3.1]{KR16} and~\cite{F-F}.
Generally speaking, Fourier analysis is a powerful tool in the study of partial differential equations and nonlocal equations. Its applications encompass the theory of pseudodifferential operators, extension methods, and various techniques for analyzing regularity, decay, and the propagation of singularities.\medskip

Besides its intrinsic elegance and the conciseness of the techniques involved, one of the advantages of the methods leveraging Bochner's relation consists of the broad versatility
of the arguments employed. In particular, the methodology employed allows us to
extend the previous result in Theorem~\ref{thm:odd:harnack} to a more general class of nonlocal operators. To this end, 
given a positive integer $n$, we consider an operator $L_n$, acting on an appropriate class of functions $u:\R^n\to \R$, and taking the form
\begin{equation}
 L_n u(x) = \int_{\R^n \setminus \{0\}} \bigl(u(x) - u(x + y) - \nabla u(x) \cdot y \chi_{B_1}(y)\bigr)\, \jj_n(|y|) dy ,
\label{5XOlUbY7}
\end{equation}%
where the kernel $\jj_n$ satisfies the usual integrability condition
\[
 \int_{\R^n \setminus \{0\}} \min\{1, |x|^2\} \, \jj_n(|x|) dx < +\infty .
\]
To proceed, we will need the following additional assumption:
\begin{equation}
\label{radialassumption}
 \text{$\jj_n$ is differentiable.}
\end{equation}
This allows us to introduce the operator $L_{n + 2}$, defined by the same formula with kernel
\begin{equation}
\label{nplus2}
 \jj_{n + 2}(r) = -\frac{\jj_n'(r)}{2 \pi r} \, .
\end{equation}
Though we will not rely on this additional assumption in the present paper, we recall that
in this framework a natural hypothesis is also to suppose that~$\jj_n$ is nonincreasing on $(0, \infty)$, so to guarantee that~$\jj_{n + 2}$ is nonnegative.

We establish that:

\begin{theorem}\label{THM5.55}
Let~$L_n$ and $L_{n + 2}$ be given by~\eqref{5XOlUbY7}, and suppose that conditions~\eqref{radialassumption} and~\eqref{nplus2} are satisfied. Suppose that $\Omega$ is an open set of~$\R^n$, and let $u$ be a function on~$\R^n$ such that $u\in C^{2}(\Omega)$, $x_1^{-1} u \in L^\infty(\R^n)$, and $u$ is antisymmetric with respect to the hyperplane~$\{x_1=0\}$,
with~$x_1u(x) \geqslant 0$ for all~$x\in \R^n$.

Let $c$ be a bounded Borel function on $\Omega$. If
\[
L_n u + c u = 0
\]
in $\Omega_+ = \{x \in \Omega : x_1 > 0\}$ and the operator $L_{n+2}$ satisfies the Harnack inequality, then for every compact subset $K$ of $\Omega$ we have
\begin{equation}
\label{eq:odd:harnack-general}
 \sup_{x \in K_0} \frac{u(x)}{x_1} \le C(K, \Omega, \|c\|_{L^\infty(\Omega_+)}) \inf_{x \in K_0} \frac{u(x)}{x_1} ,
\end{equation}
where $K_0 = \{x \in K : x_1 \ne 0\}$ and $C(K, \Omega, M)$ is the same as the constant in the Harnack inequality for $L_{n + 2}$.
\end{theorem}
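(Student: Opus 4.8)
The plan is to reduce the $n$-dimensional problem for the antisymmetric function $u$ to an $(n+2)$-dimensional interior Harnack inequality for an associated radial-in-the-first-three-variables function, exactly along the lines sketched in the introduction. Write $u(x) = x_1 f(x_1, x')$ where, by the antisymmetry of $u$ and $x_1 u \ge 0$, the function $f$ is even in its first argument and nonnegative; the hypothesis $x_1^{-1} u \in L^\infty(\R^n)$ says precisely that $f \in L^\infty$. I would then define $\tilde u$ on $\R^{n+2}$ by $\tilde u(X, x') = f(|X|, x')$, where $X \in \R^3$ and $x' \in \R^{n-1}$, so that $\tilde u$ is radial in $X$ and nonnegative everywhere. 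The key identity to establish (or to invoke, if it has been proved earlier in the body of the paper) is the operator counterpart of Bochner's relation: for the pair $L_n$, $L_{n+2}$ linked by~\eqref{nplus2}, one has $L_n u(x) = x_1 \, (L_{n+2} \tilde u)(X, x')$ whenever $|X| = |x_1|$, i.e.\ applying $L_n$ to $x_1 f(|x_1|, x')$ produces $x_1 g(|x_1|,x')$ precisely when $L_{n+2}$ applied to $f(|X|,x')$ produces $g(|X|,x')$. This is the harmonic-analytic heart of the argument and is where I expect the real work to lie — it requires checking that the kernel recombination~\eqref{nplus2} is exactly what Bochner's relation dictates at the level of the Lévy--Khintchine representation, and that the integrability/regularity assumptions ($f \in L^\infty$, $u \in C^2(\Omega)$, differentiability of $\jj_n$) are enough to justify the manipulations pointwise in $\Omega_+$.

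Granting that identity, the argument is short. Set $\tilde\Omega = \{(X, x') \in \R^{n+2} : (|X|, x') \in \Omega, \ X \neq 0\}$, an open subset of $\R^{n+2}$, and let $\tilde c(X,x') = c(|X|, x')$, a bounded Borel function. On $\tilde\Omega$ the equation $L_n u + c u = 0$ in $\Omega_+$ transcribes, via the Bochner identity and after dividing by $x_1 = |X| \neq 0$, to $L_{n+2} \tilde u + \tilde c\, \tilde u = 0$. Now $\tilde u \ge 0$ on all of $\R^{n+2}$, so the assumed Harnack inequality for $L_{n+2}$ applies: for any compact $\tilde K \subset \tilde\Omega$,
\begin{equation*}
\sup_{\tilde K} \tilde u \le C(\tilde K, \tilde\Omega, \|\tilde c\|_{L^\infty}) \inf_{\tilde K} \tilde u .
\end{equation*}
Finally, given a compact $K \subset \Omega$, I would choose $\tilde K \subset \tilde\Omega$ to be (a compact subset of) the preimage of $K_0$ under $(X, x') \mapsto (|X|, x')$; since $\tilde u(X, x') = f(|X|, x') = u(|X|, x')/|X|$ and, by antisymmetry, $|u(x)/x_1|$ depends only on $(|x_1|, x')$, the supremum and infimum of $\tilde u$ over $\tilde K$ equal those of $u(x)/x_1$ over $K_0$. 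This yields~\eqref{eq:odd:harnack-general} with the stated constant.

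Two technical points I would be careful about. First, the constant: the Harnack constant for $L_{n+2}$ depends on the geometry of $\tilde K$ inside $\tilde\Omega$, so I must verify that this can be controlled purely in terms of $K$, $\Omega$, and $\|c\|_{L^\infty(\Omega_+)} = \|\tilde c\|_{L^\infty(\tilde\Omega)}$ — this is immediate since the map $x \mapsto (X,x')$ with $|X|=x_1$ is a fixed bi-Lipschitz correspondence on the relevant regions away from $\{x_1 = 0\}$. Second, the case $x_1 < 0$: a point of $K_0$ with negative first coordinate is handled by antisymmetry, $u(x)/x_1 = u(-x_1, x')/(-x_1)$, so it suffices to treat $K_0 \cap \{x_1 > 0\}$ and the reflected compact set; alternatively one notes $u(x)/x_1 = f(|x_1|, x')$ directly. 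The main obstacle, to reiterate, is rigorously transferring the PDE through Bochner's relation at the level of these general Lévy-type operators with the minimal regularity assumed; once~\eqref{nplus2} is seen to encode exactly the Bochner correspondence, the Harnack transfer is essentially formal.
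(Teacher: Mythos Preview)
Your approach is essentially the paper's: define the $(n+2)$-dimensional lift $\tilde u(X,x') = u(|X|,x')/|X|$, transfer the equation via the Bochner-type identity $L_n u = x_1\, L_{n+2}\tilde u$, and invoke the interior Harnack inequality for $L_{n+2}$. The Bochner identity you flag as ``the real work'' is indeed established earlier in the paper (Lemma~\ref{lem:odd:operator} for Schwartz functions, extended by approximation in Theorem~\ref{thm:odd:flapBIS}\ref{thm:odd:flap:cBIS}).

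There is, however, a genuine gap in your transfer step. You set $\tilde\Omega = \{(X,x') : (|X|,x')\in\Omega,\ X\neq 0\}$, explicitly removing $X=0$, and you only establish the equation for $\tilde u$ on this punctured set. But the whole point of the theorem is the \emph{boundary} case: the compact set $K\subset\Omega$ will typically meet $\{x_1=0\}$, so $K_0$ is not compact, and any compact $\tilde K\subset\tilde\Omega$ must stay a positive distance from $\{X=0\}$. As you shrink that distance to exhaust $K_0$, $\tilde K$ approaches $\partial\tilde\Omega$ and the Harnack constant $C(\tilde K,\tilde\Omega,\cdot)$ may blow up. Your remark that the map $x\mapsto(X,x')$ is bi-Lipschitz ``away from $\{x_1=0\}$'' is precisely the problem: you have no uniform control near $x_1=0$.

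The paper closes this gap by taking $\tilde\Omega$ to be the \emph{full} preimage of $\Omega$ (including $X=0$) and showing, via the weak formulation in Theorem~\ref{thm:odd:flapBIS}\ref{thm:odd:flap:eBIS}, that $L_{n+2}\tilde v + \tilde c\,\tilde v = 0$ holds in the weak sense on all of $\tilde\Omega$, not just on $\tilde\Omega_0$. With this in hand, the compact preimage $\tilde K$ of $K$ sits well inside $\tilde\Omega$ and the Harnack constant is controlled uniformly. So the step you describe as ``essentially formal'' is not: extending the equation across the singular set $\{X=0\}$ is the second key ingredient, alongside the Bochner identity.
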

For the precise definition of an operator satisfying the Harnack inequality, see Definition~\ref{mTbTJpHm}. The Harnack inequality (with a scale-invariant constant) is known to hold for a large class of operators $L_{n + 2}$, at least when $c = 0$; we refer to Remark~\ref{rem:harnack} in Section~\ref{OJSDLN09ryihfjg9reSTRpsdlmvIJJ} for further discussion.

The rest of the paper is organized as follows. In Section~\ref{SE:2}, we give some heuristic comments about the idea of reducing boundary Harnack inequalities to interior ones, to motivate the main strategy adopted in this paper, while, in Section~\ref{sec:bochner},
we recall Bochner's relation and frame it into a setting convenient for our purposes.

The techniques thus introduced are then applied to the case of the fractional Laplace operator in Section~\ref{sec:flap}, leading to the proof of Theorem~\ref{thm:odd:harnack}.

The case of general nonlocal operators will be explained in detail in Section~\ref{OJSDLN09ryihfjg9reSTRpsdlmvIJJ}.

In Section~\ref{sodjvlfORTYHSJDgh} we also recall that
the antisymmetric boundary
Harnack inequality is also related to a weak Harnack inequality for super-solutions and to a local boundedness for sub-solutions, which are of independent interest,
and we provide new proofs of these results by exploiting the methodology put forth in this paper.

\section{Reducing boundary estimates to interior estimates: a heuristic discussion}\label{SE:2}

The gist of the arguments that we exploit can be easily explained in the classical case of the Laplacian, in which all computations are straightforward, but already reveal a hidden higher dimensional geometric feature of the problem.

For this, we take coordinates~$y\in\R^k$ and~$z\in\R^{m}$. We use the notation~$\rho=|y|$ and, given a smooth function~$u$ in a given domain of~$\R^{m+1}$, for~$\lambda\in\R$ we set
$$ \tilde v(y,z):=|y|^\lambda u(|y|,z)=\rho^\lambda u(\rho,z).$$
Since, for every~$j\in\{1,\dots,k\}$,
$$ \rho\partial_{y_j} \rho=\frac12\partial_{y_j} \rho^2=\frac12\partial_{y_j}(y_1^2+\dots+y_k^2)=y_j,$$
and therefore
$$ \nabla_{y} \rho=\frac{y}\rho,$$
computing the Laplacian in cylindrical coordinates we find that
\begin{align*}
\Delta\tilde v&=\Delta_{y,z}\rho^\lambda \,u+\rho^\lambda \Delta_{y,z}u +2\nabla_{y,z} \rho^\lambda \cdot\nabla_{y,z} u\\&=
\Delta_y \rho^\lambda\, u+\rho^\lambda \big(\Delta_y u+\Delta_z u\big)+2\nabla_y \rho^\lambda\cdot\nabla_y u\\&=
(\lambda+k-2)\lambda\rho^{\lambda-2}u
+\rho^\lambda \big(\partial^2_\rho u+(k-1)\rho^{-1}\partial_\rho u+\Delta_zu\big)\\&\qquad+2\big(\lambda \rho^{\lambda-1}\nabla_y\rho\big)\cdot\big(\partial_\rho u\nabla_y\rho \big)
\\&=(\lambda+k-2)\lambda\rho^{\lambda-2}u+(2\lambda+k-1) \rho^{\lambda-1}\partial_\rho u+\rho^\lambda \partial^2_\rho u+\rho^\lambda \Delta_zu.
\end{align*}
The choice~$2\lambda+k-1=0$ allows one to cancel the first order term.
The additional choice~$\lambda+k-2=0$ makes the zero order term vanish. All in all, if~$k=3$ and~$\lambda=-1$, we find that
\begin{equation}\label{TBCW} \Delta\tilde v=\frac{\Delta u}\rho,\end{equation}
hence~$\tilde v$ is harmonic if and only if so is~$u$.

For us, the interest of~\eqref{TBCW} is that, for instance, it allows us to recover a boundary Harnack inequality for the Laplace
operator straightforwardly from the interior Harnack inequality.
Specifically, {\em if~$c$ is a bounded Borel function on $B_1^+:=B_1\cap\{x_1>0\}\subset\R^n$
and~$u$ is a nonnegative solution of~$-\Delta u + c u = 0$ in~$B_1^+$, then
\begin{equation}
\label{PRE:eq:odd:harnack}
 \sup_{x \in B_{1/2}^+} \frac{u(x)}{x_1} \le C \inf_{x \in B_{1/2}^+} \frac{u(x)}{x_1} ,
\end{equation}
where $C$ is a positive constant depending only on~$n$ and~$\|c\|_{L^\infty(B_1^+)}$}.

In our setting, the proof of~\eqref{PRE:eq:odd:harnack} can be obtained directly combining~\eqref{TBCW} and the classical Harnack inequality: namely, if we use the notation~$\tilde c(y,z):=c(|y|,z)$, we deduce from~\eqref{TBCW}  that
$$\big(-\Delta \tilde v + \tilde c \tilde v\big)|y| =-\Delta u + c u = 0$$
whenever~$(|y|,z)\in B_1$ and so, in particular, for all~$(y,z)\in\R^3\times\R^{n-1}$ such that~$|(y,z)|<1$.

This and the classical Harnack inequality lead to
$$ \sup_{|(y,z)|<1/2} \tilde v(y,z) \le C \inf_{|(y,z)|<1/2} \tilde v(y,z),$$
from which~\eqref{PRE:eq:odd:harnack} plainly follows.

The strategy that we follow in this note is precisely to adapt this method to more general settings of nonlocal type.
In this situation, additional symmetry structures have to be imposed on the solution, due to the nonlocal features of the operator.
Moreover, the analog of~\eqref{TBCW} requires a series of more subtle arguments, since cylindrical coordinates are typically not easy to handle in a nonlocal framework, due to remote point interactions: this difficulty will be overcome by exploiting 
some classical tools from harmonic analysis, as described in detail in the forthcoming Section~\ref{sec:bochner}.

\section{Bochner's relation and its consequences}
\label{sec:bochner}

\subsection{Notation}

Let~$\ell\in\N$.
As usual, a {\em solid harmonic polynomial} of degree $\ell$ is a homogeneous polynomial $V$ on $\R^n$ of degree $\ell$ (that is, $V(r x) = r^\ell V(x)$) satisfying the Laplace equation $\Delta V = 0$.

Below, we will also use symbols with a tilde to denote elements of $\R^{n + 2 \ell}$ and functions on $\R^{n + 2 \ell}$, while symbols without a tilde for vectors in~$\R^n$ and functions on~$\R^n$. That is, for typographical convenience, we will usually (i.e., unless differently specified) write
$x\in\R^n$ and $\tilde x\in\R^{n + 2 \ell}$, and similarly~$f:\R^n\to\R$ and~$\tilde{f}:\R^{n+2\ell}\to\R$.

We use $\fourier_n$ to denote the $n$-dimensional {\em Fourier transform}: if $f$ is an integrable function on $\R^n$, then
\[
 \fourier_n f(\xi) = \int_{\R^n} f(x) e^{2\pi i \xi\cdot x}\, dx .
\]

When we want to emphasize that we are taking the Fourier transform with respect to the variable~$x$
we also use the notation~$ \fourier_n^{(x)}$. For instance, if~$f:\R^{2n}=\R^n\times\R^n\to\R$, to be written as~$f(x,y)$, we have that, for each~$y\in\R^n$,
$$  \fourier_n^{(x)} f(\xi,y) = \int_{\R^n} f(x,y) e^{2\pi i \xi\cdot x} \,dx .$$
We are following here the
convention on the Fourier transform
on page 28 in~\cite{stein}.

We also consider $L_n$ to be a Fourier multiplier on $\R^n$ with radial symbol $\psi(|\xi|)$, that is
\begin{equation}\label{FOU:SI}
 \fourier_n L_n f(\xi)  = \psi(|\xi|) \fourier_n f(\xi)
\end{equation}
for every Schwartz function $f$
(see the forthcoming calculation in~\eqref{Coqkf:FR}
for an explicit computation of a Fourier symbol).

The case in which $\psi(r) = r^{2 s}$ corresponds to the fractional Laplacian $L_n = (-\Delta)^s$, but the contents of this section work for general $\psi$ with at most polynomial growth.

Let $\mathcal R$ be a rotation on $\R^n$ (that is, an orthogonal transformation of $\R^n$), and for a function $f$ on $\R^n$ denote $\mathcal R f(\xi) = f(\mathcal R \xi)$. By definition, we have the following transformation rule for the Fourier transform:
\[
 \fourier_n (\mathcal R f)(\xi) = \mathcal R (\fourier_n f)(\xi) .
\]
Radial functions are invariant under $\mathcal R$, and so
\[
 \psi(|\xi|) \fourier_n (\mathcal R f)(\xi) = \mathcal R \bigl[\psi(|\xi|) \fourier_n f(\xi)\bigr] .
\]
The inverse Fourier transform satisfies a similar transformation rule, so that
\[
 \fourier_n^{-1} \bigl[\psi(|\xi|) \fourier_n \mathcal R f(\xi)\bigr] = \mathcal R \bigl[ \fourier_n^{-1} \bigl[ \psi(|\xi|) \fourier_n f(\xi) \bigr]\bigr]
\]
whenever $f$ is, say, a Schwartz function. In view of~\eqref{FOU:SI}, this means that
\begin{equation}
\label{eq:commute}
 L_n \mathcal R f(\xi) = \mathcal R L_n f(\xi) ,
\end{equation}
that is, $L_n$ commutes with rotations.

Let us call a function $f$ on $\R^n$ isotropic with respect to the first $k$ coordinates, or briefly \emph{$k$-isotropic}, if $f(x) = f(y)$ whenever
\[
 |(x_1, x_2, \ldots, x_k)| = |(y_1, y_2, \ldots, y_k)| \quad \text{and} \quad (x_{k+1}, x_{k+2}, \ldots, x_n) = (y_{k+1}, y_{k+2}, \ldots, y_n) .
\]
We will only use this notion for $k = 3$: the proof of our main result involves $3$-isotropic functions in $\R^{n+2}$.

A function $f$ on $\R^n$ is said to be \emph{symmetric} (with respect to the first variable) if~$f(-x_1, x_2, \ldots, x_n) = f(x_1, x_2, \ldots, x_n)$ (that is, if~$f$ is even with respect to the first variable; this coincides with the notion of a $1$-isotropic function). Similarly, $f$ is said to be \emph{antisymmetric} (with respect to the first variable) if~$f(-x_1, x_2, \ldots, x_n) = -f(x_1, x_2, \ldots, x_n)$. Clearly, $f$ is symmetric if and only if $x_1 f(x)$ is antisymmetric.

If $f$ is a $k$-isotropic function on $\R^n$, then $f$ is invariant under every rotation $\mathcal R$ which only acts on the first $k$ coordinates and leaves the remaining $n - k$ coordinates unchanged: $\mathcal R f = f$. In view of~\eqref{eq:commute}, we have $\mathcal R(L_n f) = L_n(\mathcal R f) = L_n f$, that is, $L_n f$ is invariant under $\mathcal R$. In other words, $L_n$ maps $k$-isotropic Schwartz functions into $k$-isotropic functions.

\subsection{Fourier transforms of radial functions} Now
we recall the tools from harmonic analysis which will come in handy for the development of the theory.
The gist is the link of the Fourier transform of an $n$-dimensional radial function (multiplied by an appropriate
harmonic polynomial
of degree~$\ell$) with that
of its $(n+2\ell)$-dimensional counterpart.

For instance, roughly speaking, for any even function~$f$ on~$\R$, if~$\tilde f:\R^3\to\R$ has the same profile of~$f$ (i.e., $\tilde f(\tilde x)=f(|\tilde x|)$ for each~$\tilde x\in\R^3$), then
the Fourier transform of the one-dimensional function~$x f(x)$ coincides with~$i\xi$ times
the Fourier transform of the three-dimensional radial function~$\tilde f$ at~$\tilde\xi$, as long as~$\tilde\xi\in\R^3$
is such that~$|\xi|=|\tilde\xi|$.

This fact holds true in higher dimensions as well, with suitable modifications,
and goes under the name of Bochner's relation (or Hecke--Bochner identity). This will be our crucial tool and we now recall its precise statement:

\begin{theorem}[Bochner's relation; Corollary on page 72 in~\cite{stein}]
\label{thm:bochner}
Let $f$ and $\tilde{f}$ be two radial Schwartz functions on $\R^n$ and $\R^{n + 2 \ell}$, respectively, with the same profile function: $f(x) = \tilde{f}(\tilde{x})$ whenever $|x| = |\tilde x|$.

Let $V$ be a solid harmonic polynomial on $\R^n$ of degree $\ell$.

Then,
\[
 \fourier_n (V f)(\xi) = i^\ell V(\xi) \fourier_{n + 2 \ell} \tilde{f}(\tilde{\xi}) ,
\]
for every~$\xi\in\R^n$ and~$\tilde\xi\in\R^{n+2\ell}$ such that $|\tilde{\xi}| = |\xi|$.
\end{theorem}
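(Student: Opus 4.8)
The plan is to prove Bochner's relation by reducing it to the key one-variable identity for the Fourier--Bessel (Hankel) transform, exploiting the fact that the $n$-dimensional Fourier transform of a radial function is, up to normalisation, a Hankel transform of order $\tfrac n2-1$. Concretely, I would first recall that for a radial function $f$ on $\R^n$ with profile $f_0$ (so $f(x)=f_0(|x|)$), one has $\fourier_n f(\xi)=F_{n/2-1}[f_0](|\xi|)$, where $F_\nu$ denotes the modified Hankel transform with kernel built from $J_\nu$. The content of Bochner's relation is then the single identity relating $F_{n/2-1+\ell}$ to $F_{n/2-1}$ after multiplication by $r^\ell$, which encodes the classical recurrence $\frac{d}{dr}\bigl(r^{-\nu}J_\nu(r)\bigr)=-r^{-\nu}J_{\nu+1}(r)$ for Bessel functions. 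Since $n+2\ell$ corresponds to the order $\tfrac n2-1+\ell=\nu+\ell$, this is exactly the bookkeeping needed.

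The key steps, in order, are as follows. First, it suffices to prove the identity for a single harmonic polynomial of each degree $\ell$ on each $\R^n$ and then invoke the action of the orthogonal group: both sides of $\fourier_n(Vf)(\xi)=i^\ell V(\xi)\fourier_{n+2\ell}\tilde f(\tilde\xi)$ transform the same way under rotations $\mathcal R$ of $\R^n$ (on the left via $\fourier_n(\mathcal R V\cdot f)=\mathcal R[\fourier_n(Vf)]$ since $f$ is radial, on the right since $V(\mathcal R\xi)=(\mathcal R V)(\xi)$ and $\fourier_{n+2\ell}\tilde f$ is radial), and the harmonic polynomials of degree $\ell$ are spanned by rotations of the ``zonal'' one. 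Second, I would choose the explicit zonal harmonic adapted to computation — in the classical argument one takes $V$ to be (the degree-$\ell$ part of) $(\xi_1+i\xi_2)^\ell$ or, more to the point for this paper, reduce to $\ell=1$ with $V(\xi)=\xi_1$, which is all that Theorem~\ref{THM5.55} actually needs. Third, with $V(x)=x_1$, write $\fourier_n(x_1 f)(\xi)=\frac{1}{2\pi i}\partial_{\xi_1}\fourier_n f(\xi)$, use that $\fourier_n f(\xi)=\Phi(|\xi|^2)$ for a radial profile $\Phi$, so $\partial_{\xi_1}\fourier_n f(\xi)=2\xi_1\Phi'(|\xi|^2)$, and separately compute that $\fourier_{n+2}\tilde f(\tilde\xi)=-\tfrac1\pi\Phi'(|\tilde\xi|^2)$ by differentiating the $n$-dimensional radial Fourier integral under the integral sign and recognising the $(n+2)$-dimensional one — this is precisely the computation already foreshadowed in~\eqref{nplus2} with $\jj_{n+2}(r)=-\jj_n'(r)/(2\pi r)$. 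Combining the two displays yields $\fourier_n(x_1 f)(\xi)=i\,\xi_1\,\fourier_{n+2}\tilde f(\tilde\xi)$ whenever $|\xi|=|\tilde\xi|$, which is the $\ell=1$ case; iterating in $\ell$ (or running the Hankel recurrence $\ell$ times) gives the general statement.

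For the general-$\ell$ step I would set up the induction cleanly: given the result for degree $\ell$ on all dimensions, take a harmonic polynomial $V$ of degree $\ell+1$ on $\R^n$, write $V(x)=x_1 W(x)+\text{(lower order correction)}$ is not quite harmonic, so instead I would argue more invariantly — pass to the associated Gegenbauer/zonal representation and use that multiplication by $|x|^2$ intertwines the dimension shift, together with the fact that applying $\fourier_n$ to $V f$ where $V$ has degree $\ell+1$ can be obtained from the degree-$\ell$ case in dimension $n+2$ by the same radial-profile differentiation trick. This is the standard inductive scheme for the Hecke--Bochner identity and matches~\cite[page 72]{stein}.

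The main obstacle I anticipate is the bookkeeping in the general-$\ell$ inductive step: keeping track of the constants $i^\ell$ and the normalisation of the Fourier transform through each dimension shift, and handling the fact that $x_1\cdot(\text{harmonic of degree }\ell)$ is not harmonic, so one cannot naively peel off one factor of $x_1$ at a time within a fixed dimension. The clean route around this is to do the induction across dimensions simultaneously (proving the statement for all $n$ at once), where the interplay between multiplication by $x_1$, the radial-profile derivative, and the passage $n\mapsto n+2$ closes up; the $\ell=1$ case, which is all that is needed for Theorem~\ref{THM5.55}, is by contrast entirely elementary and reduces to the two explicit computations above together with the rotation-covariance reduction. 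Since the paper only invokes Theorem~\ref{thm:bochner} with $\ell=1$, I would present the $\ell=1$ proof in full and indicate that the general case follows by the same mechanism applied inductively, referring to~\cite{stein} for the classical formulation.
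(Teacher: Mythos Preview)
The paper does not give a proof of Theorem~\ref{thm:bochner} at all: it is quoted verbatim as ``Corollary on page~72 in~\cite{stein}'' and used as a black box. There is therefore no proof in the paper to compare your proposal against.

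On the substance of your sketch: the $\ell=1$ argument is correct and self-contained. With the paper's Fourier convention, $\fourier_n(x_1 f)(\xi)=\tfrac{1}{2\pi i}\partial_{\xi_1}\fourier_n f(\xi)$, and writing $\fourier_n f(\xi)=\Phi(|\xi|^2)$ one gets $\fourier_n(x_1 f)(\xi)=\tfrac{\xi_1}{\pi i}\Phi'(|\xi|^2)$; the companion identity $\fourier_{n+2}\tilde f(\tilde\xi)=-\tfrac{1}{\pi}\Phi'(|\tilde\xi|^2)$ is easily checked (e.g.\ on Gaussians, which are dense among radial Schwartz profiles), giving $\fourier_n(x_1 f)(\xi)=i\,\xi_1\,\fourier_{n+2}\tilde f(\tilde\xi)$. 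The rotation-covariance reduction then handles all degree-$1$ harmonics. Since the paper only ever uses the case $n=\ell=1$, $V(x)=x$ (see~\eqref{eq:bochner:odd} and Lemma~\ref{lem:odd:fourier}), this is already more than the paper needs.

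Your treatment of general $\ell$, however, is not a proof but a gesture toward one. You correctly identify the obstruction---$x_1$ times a degree-$\ell$ harmonic is not harmonic, so one cannot induct naively---but the resolution you offer (``argue more invariantly'', ``pass to the Gegenbauer/zonal representation'', ``the standard inductive scheme'') is not spelled out. The classical proofs (e.g.\ in Stein--Weiss) go through the Funk--Hecke formula or an explicit Hankel-transform computation with Gegenbauer polynomials, not through the dimension-shift induction you describe; if you want to present a complete argument for general $\ell$ you should make one of those routes explicit. Given that the paper itself simply cites~\cite{stein}, the appropriate course here is exactly what you propose at the end: give the $\ell=1$ proof in full and refer to~\cite{stein} for the general statement.
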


In the statement of Theorem~\ref{thm:bochner}, the assumption on the profile function of~$f$ and~$\tilde f$ simply means that
there exists a suitable~$f_0:\R\to\R$ such that~$f(x)=f_0(|x|)$ and~$\tilde f(\tilde x)=f_0(|\tilde x|)$ for all~$x\in\R^n$ and~$\tilde x\in\R^{n+2\ell}$ (the function~$f_0$ is called in jargon ``profile function'').

In our setting, we will only need Theorem~\ref{thm:bochner} for $n= \ell = 1$ and $V(x) = x$. In this case Theorem~\ref{thm:bochner} states that if $f$ is an even function on $\R$ and $\tilde{f}(\tilde{x}) = f(|x|)$ is the corresponding radial function on $\R^3$, then
\begin{equation}
\label{eq:bochner:odd}
 \fourier_1[x f(x)](\xi) = i \xi \fourier_3 \tilde{f}(\xi, 0, 0) .
\end{equation}

We have the following immediate extension of Theorem~\ref{thm:bochner}.

\begin{lemma}
\label{lem:odd:fourier}
Consider a function~$f:\R^n=\R\times\R^{n-1}\to\R$,
to be written as~$f(x, y)$, where $x \in \R$ and $y \in \R^{n - 1}$.

Assume that~$f$ is a Schwartz function on $\R^n$ which is symmetric with respect to the variable $x$ (i.e., $f(-x, y) = f(x, y)$
for all~$x \in \R$ and $y \in \R^{n - 1}$).

Let~$\tilde f:\R^{n+2}=\R^3\times\R^{n-1}\to\R$,
to be written as~$\tilde{f}(\tilde{x}, y)$, where $\tilde{x} \in \R^3$ and $y \in \R^{n - 1}$, be a $3$-isotropic function given by
\[
 \tilde{f}(\tilde{x}, y) = f(|\tilde{x}|, y) .
\]
Then,
\[
 \fourier_n[x f(x, y)](\xi, \eta) = i \xi \fourier_{n + 2} \tilde{f}((\xi, 0, 0), \eta) . \qedhere
\]
\end{lemma}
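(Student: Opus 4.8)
The plan is to deduce Lemma~\ref{lem:odd:fourier} from Theorem~\ref{thm:bochner} by freezing the $y$-variable and applying Bochner's relation in the $x$-variable alone, then integrating in $y$. More precisely, I would first take the Fourier transform only in $x$: for each fixed $y\in\R^{n-1}$, the function $x\mapsto f(x,y)$ is an even Schwartz function on $\R$, and $\tilde x\mapsto\tilde f(\tilde x,y)=f(|\tilde x|,y)$ is its radial counterpart on $\R^3$. Hence the one-dimensional case $n=\ell=1$, $V(x)=x$ of Bochner's relation — that is, equation~\eqref{eq:bochner:odd} — gives
\[
 \fourier_1^{(x)}[x f(x,y)](\xi,y) = i\xi\,\fourier_3^{(\tilde x)}\tilde f((\xi,0,0),y)
\]
for every $\xi\in\R$ and every $y\in\R^{n-1}$.

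Next I would take the $(n-1)$-dimensional Fourier transform in $y$ of both sides. On the left, $\fourier_n=\fourier_{n-1}^{(y)}\circ\fourier_1^{(x)}$ (Fubini applies since $xf(x,y)$ is Schwartz), so the left-hand side becomes $\fourier_n[xf(x,y)](\xi,\eta)$. On the right, $\fourier_{n-1}^{(y)}$ commutes with multiplication by the constant-in-$y$ factor $i\xi$, and $\fourier_{n-1}^{(y)}\circ\fourier_3^{(\tilde x)}=\fourier_{n+2}$ acting on the function $\tilde f(\tilde x,y)$ on $\R^3\times\R^{n-1}=\R^{n+2}$; hence the right-hand side becomes $i\xi\,\fourier_{n+2}\tilde f((\xi,0,0),\eta)$. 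Combining the two displays yields exactly the claimed identity. I would also note that $\tilde f$ is a Schwartz function on $\R^{n+2}$ — this follows since $f$ is Schwartz and even in $x$, so its profile extends smoothly, making all the Fourier transforms above well-defined classical integrals and justifying the use of Fubini.

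The only genuine point requiring care — and the step I would flag as the main obstacle — is the interchange of the two partial Fourier transforms and the verification that $\fourier_{n-1}^{(y)}\fourier_3^{(\tilde x)}\tilde f=\fourier_{n+2}\tilde f$ with the arguments lined up correctly, i.e.\ that $((\xi,0,0),\eta)\in\R^3\times\R^{n-1}$ is read as a point of $\R^{n+2}$ in the same coordinates in which $\fourier_3^{(\tilde x)}$ was taken at $(\xi,0,0)$. This is purely a bookkeeping matter about the product structure of the Fourier transform on $\R^{n+2}=\R^3\times\R^{n-1}$ and the fact that $\fourier_1[xf](\xi,y)$, viewed as a function of $(\xi,y)$, is still Schwartz (so the subsequent transform in $y$ is legitimate). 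Everything else is a direct citation of Theorem~\ref{thm:bochner}, so the proof will be short; the word ``immediate'' in the statement is warranted.
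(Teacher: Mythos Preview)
Your proposal is correct and follows essentially the same approach as the paper's proof: apply Bochner's relation~\eqref{eq:bochner:odd} in the $x$-variable for each fixed $y$, then take the Fourier transform in $y$. The paper's argument is even terser than yours and does not spell out the Fubini/bookkeeping remarks you add, but the logical content is identical.
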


\begin{proof}
Given $y \in \R^{n - 1}$, we apply Bochner's relation~\eqref{eq:bochner:odd} with respect to $x \in \R$ to find that
\[
 \fourier_1^{(x)}[x f(x, y)](\xi, y) = i \xi \fourier_3^{(\tilde{x})} \tilde{f}(\tilde{\xi}, y) .
\]
It is now sufficient to apply the Fourier transform with respect to $y \in \R^{n - 1}$ for each fixed $\xi \in \R$.
\end{proof}

From Lemma~\ref{lem:odd:fourier} we obtain the following result.

\begin{lemma}
\label{lem:odd:operator}
In the assumptions of Lemma~\ref{lem:odd:fourier}, for every~$x\in\R$ and~$y\in\R^{n-1}$ we have that
\[
 L_n[x f(x, y)] = x L_{n + 2} \tilde{f}((x, 0, 0), y) .
\]
\end{lemma}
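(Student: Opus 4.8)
The plan is to pass to the Fourier side and combine the multiplier relation~\eqref{FOU:SI} with Lemma~\ref{lem:odd:fourier}, used in both directions. First note that, since $f$ is a Schwartz function that is symmetric with respect to $x$, the map $(x,y)\mapsto x f(x,y)$ is Schwartz on $\R^n$, so $L_n[x f(x,y)]$ is well defined; applying~\eqref{FOU:SI} and then Lemma~\ref{lem:odd:fourier} one gets
\[
 \fourier_n\bigl[L_n[x f(x,y)]\bigr](\xi,\eta)
 = \psi(|(\xi,\eta)|)\,\fourier_n[x f(x,y)](\xi,\eta)
 = i\xi\,\psi(|(\xi,\eta)|)\,\fourier_{n+2}\tilde f\bigl((\xi,0,0),\eta\bigr)
\]
for all $\xi\in\R$ and $\eta\in\R^{n-1}$.

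The next step is to read this right-hand side back through $L_{n+2}$. The elementary but decisive observation is that the Euclidean norm is preserved under the embedding $(\xi,\eta)\mapsto((\xi,0,0),\eta)$ of $\R^n$ into $\R^{n+2}$, so $|((\xi,0,0),\eta)|=|(\xi,\eta)|$ and $\psi(|(\xi,\eta)|)$ is precisely the radial symbol of $L_{n+2}$ evaluated at $((\xi,0,0),\eta)$; hence, by~\eqref{FOU:SI} in dimension $n+2$, the displayed quantity equals $i\xi\,\fourier_{n+2}\bigl[L_{n+2}\tilde f\bigr]\bigl((\xi,0,0),\eta\bigr)$. It then remains to identify this with $\fourier_n$ of the function $x\,(L_{n+2}\tilde f)\bigl((x,0,0),y\bigr)$, which by injectivity of the Fourier transform would conclude the proof. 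For that I would use that $\tilde f$ is $3$-isotropic and that $L_{n+2}$ commutes with rotations (see~\eqref{eq:commute} and the ensuing discussion), so $\tilde g:=L_{n+2}\tilde f$ is again $3$-isotropic; consequently $G(x,y):=\tilde g\bigl((x,0,0),y\bigr)$ is even in $x$ and its $3$-isotropic extension to $\R^{n+2}$ is $\tilde g$ itself. Applying Lemma~\ref{lem:odd:fourier} with $f$ replaced by $G$ then yields $\fourier_n[x G(x,y)](\xi,\eta)=i\xi\,\fourier_{n+2}\tilde g\bigl((\xi,0,0),\eta\bigr)$, which matches the expression found above.

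The one point needing care — and the main, if mild, obstacle — is that Lemma~\ref{lem:odd:fourier} is stated for Schwartz functions, whereas $\tilde g=L_{n+2}\tilde f$ need not be Schwartz when $\psi$ merely has polynomial growth: for instance, for $\psi(r)=r^{2s}$ the product $|\xi|^{2s}\fourier_{n+2}\tilde f(\xi)$ is rapidly decreasing but not smooth at the origin, so $\tilde g$ is smooth with bounded derivatives yet not rapidly decreasing. I would deal with this by extending the conclusion of Lemma~\ref{lem:odd:fourier} to $3$-isotropic tempered functions of this type — approximating $\tilde g$ by the Schwartz $3$-isotropic functions $\tilde g\,\widetilde{\chi_R}$, with $\chi_R$ an even Schwartz cutoff increasing to $1$, applying Lemma~\ref{lem:odd:fourier} to the even Schwartz functions $G\chi_R$, and passing to the limit in $\schw'$ — or, alternatively, by first proving the identity under the extra assumption that $\psi(|\cdot|)$ is smooth, so that all the functions involved are genuinely Schwartz, and then removing that assumption by approximating the symbol.
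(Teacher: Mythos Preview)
Your proof is correct and follows essentially the same route as the paper: take Fourier transforms, apply Lemma~\ref{lem:odd:fourier} once to $f$, use that $|((\xi,0,0),\eta)|=|(\xi,\eta)|$ to pull the radial symbol through, note that $\tilde g:=L_{n+2}\tilde f$ is $3$-isotropic, and apply Lemma~\ref{lem:odd:fourier} a second time (now to $G(x,y)=\tilde g((x,0,0),y)$) to return to $\R^n$. Your final paragraph, in which you worry that $\tilde g$ need not be Schwartz and sketch an approximation argument to justify the second application of Lemma~\ref{lem:odd:fourier}, is in fact more careful than the paper itself, which simply invokes the lemma at that step without further comment.
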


\begin{proof}
Let us denote
\[
 \tilde{g}(\tilde{x}, y) = L_{n + 2} \tilde{f}(\tilde{x}, y)
\]
and
\[
 g(x, y) = \tilde{g}((x, 0, 0), y) = L_{n +2} \tilde{f}((x, 0, 0), y) .
\]
Since $L_{n + 2}$ maps $3$-invariant functions into $3$-invariant functions, $\tilde{g}$ is $3$-invariant.

Let now~$(\xi,\eta)\in\R\times\R^{n-1}$ and let~$\tilde\xi\in\R^3$ be such that~$|\xi| = |\tilde{\xi}|$. Using, in order, the definition of $L_n$
in~\eqref{FOU:SI}, Lemma~\ref{lem:odd:fourier}, once again the definition of $L_{n + 2}$ in~\eqref{FOU:SI}, the definition of~$\tilde{g}$, and once again Lemma~\ref{lem:odd:fourier}, we find that
\begin{align*}
 \fourier_n L_n[x f(x, y)](\xi, \eta) & = \psi(|(\xi, \eta)|)\;\fourier_n[x f(x, y)](\xi, \eta) \\
 & = \psi(|(\xi, \eta)|) \; i \xi \fourier_{n + 2} \tilde{f}(\tilde{\xi}, \eta) \\
 & = i \xi \; \psi(|(\tilde \xi, \eta)|) \; \fourier_{n + 2} \tilde{f}(\tilde{\xi}, \eta) \\
 & = i \xi \fourier_{n + 2} L_{n + 2} \tilde{f}(\tilde{\xi}, \eta) \\
 & = i \xi \fourier_{n + 2} \tilde{g}(\tilde{\xi}, \eta) \\
 & = \fourier_n[x g(x, y)](\xi, \eta) .
\end{align*}
Thus, by inverting the Fourier transform,
\[ L_n[x f(x, y)]=x g(x, y). \]
Combining this and the definition of $g$, we conclude that
\begin{equation*}
L_n[x f(x, y)]  
=x L_{n + 2} \tilde{f}((x, 0, 0), y),
\end{equation*}
as desired.
\end{proof}

In the same vein, one shows the following more general result (we omit the proof since it can be obtained via the same argument as above, by utilizing the general identity in Theorem~\ref{thm:bochner} instead of the ones specialized for the case~$V(x)=x$).

\begin{lemma}
\label{lem:solid:operator}
Let $V$ be a solid harmonic polynomial of degree $\ell$ on $\R^k$. Let $f(x, y)$, where $x \in \R^k$ and $y \in \R^{n - k}$, be a Schwartz function on $\R^n$ which is isotropic with respect to the first variable $x$: $f(x, y) = f(x', y)$ whenever $|x| = |x'|$. Let $\tilde{f}(\tilde{x}, y)$, where $\tilde{x} \in \R^{k + 2 \ell}$ and $y \in \R^{n - k}$, be the corresponding Schwartz function on $\R^{n + 2 \ell}$, given by
\[
 \tilde{f}(\tilde{x}, y) = f(x, y)
\]
whenever $|\tilde{x}| = |x|$.

Then
\[
 L_n[V(x) f(x, y)] = V(x) L_{n + 2 \ell} \tilde{f}((x, 0), y) ,
\]
where $(x, 0) \in \R^{k + 2 \ell}$ stands for the vector $x \in \R^k$ padded with $2 \ell$ zeroes.
\end{lemma}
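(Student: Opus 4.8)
The plan is to run, \emph{mutatis mutandis}, the argument that proves Lemma~\ref{lem:odd:operator}, with Bochner's relation in its full form (Theorem~\ref{thm:bochner}) replacing the special case~$V(x)=x$. The first step is to establish the solid-harmonic analogue of Lemma~\ref{lem:odd:fourier}: namely, that
\[
 \fourier_n[V(x) f(x, y)](\xi, \eta) = i^\ell V(\xi)\, \fourier_{n + 2 \ell} \tilde{f}((\tilde{\xi}, \eta))
\]
for every~$\xi \in \R^k$, $\eta \in \R^{n - k}$ and~$\tilde{\xi} \in \R^{k + 2 \ell}$ with~$|\tilde{\xi}| = |\xi|$. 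This I would obtain exactly as in the proof of Lemma~\ref{lem:odd:fourier}: for each fixed~$y$, the function~$f(\cdot, y)$ is a radial Schwartz function on~$\R^k$, so Theorem~\ref{thm:bochner}, applied in dimension~$k$ with the harmonic polynomial~$V$, gives~$\fourier_k^{(x)}[V(x) f(x, y)](\xi, y) = i^\ell V(\xi) \fourier_{k + 2 \ell}^{(\tilde{x})} \tilde{f}(\tilde{\xi}, y)$; one then applies the~$(n-k)$-dimensional Fourier transform in~$y$ and uses that the iterated partial transforms compose to~$\fourier_n$ and~$\fourier_{n + 2 \ell}$, respectively.

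With this in hand, I would set~$\tilde{g}(\tilde{x}, y) := L_{n + 2 \ell} \tilde{f}(\tilde{x}, y)$ and~$g(x, y) := \tilde{g}((x, 0), y)$, where~$(x, 0) \in \R^{k + 2 \ell}$. Since~$\tilde{f}$ is isotropic with respect to the first~$k + 2 \ell$ coordinates and, by the rotation-commuting property~\eqref{eq:commute}, $L_{n + 2 \ell}$ preserves this class of functions, $\tilde{g}$ is itself isotropic in the first~$k + 2 \ell$ coordinates; hence~$g$ is isotropic in its variable~$x \in \R^k$ and~$\tilde{g}(\tilde{x}, y) = g(x, y)$ whenever~$|\tilde{x}| = |x|$, i.e. $(g, \tilde{g})$ is again a pair as in the statement, so the displayed identity above holds with~$(f, \tilde{f})$ replaced by~$(g, \tilde{g})$. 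Then, using in turn the multiplier definition~\eqref{FOU:SI} of~$L_n$, the identity above, the equality~$|(\xi, \eta)| = |(\tilde{\xi}, \eta)|$ (which lets the common radial symbol~$\psi$ be moved onto the~$(n + 2 \ell)$-dimensional side), the multiplier definition of~$L_{n + 2 \ell}$, the definition of~$\tilde{g}$, and finally the identity above applied to~$(g, \tilde{g})$, one gets
\[
 \fourier_n L_n[V(x) f(x, y)](\xi, \eta) = \fourier_n[V(x) g(x, y)](\xi, \eta) .
\]
Inverting the Fourier transform yields~$L_n[V(x) f(x, y)] = V(x) g(x, y) = V(x) L_{n + 2 \ell} \tilde{f}((x, 0), y)$, which is the claim.

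I expect the algebra to be routine; the point requiring care is the functional-analytic one, already implicit in the proof of Lemma~\ref{lem:odd:operator}: when~$\psi$ has merely polynomial growth (as for the fractional Laplacian), $\tilde{g} = L_{n + 2 \ell} \tilde{f}$ need not be a Schwartz function, so Theorem~\ref{thm:bochner} does not literally apply to~$g$ and the Fourier-side manipulations have to be read in the sense of tempered distributions (or justified by a density/approximation argument). A minor bookkeeping subtlety is that~$V$ is a polynomial on~$\R^k$ only, so in~$i^\ell V(\xi)$ the argument is the~$\R^k$-vector~$\xi$ and must not be replaced by the padded vector~$\tilde{\xi} \in \R^{k + 2 \ell}$; this causes no inconsistency because~$\fourier_{k + 2 \ell} \tilde{f}$ and~$\fourier_{k + 2 \ell} \tilde{g}$ are radial in the first~$k + 2 \ell$ variables and therefore depend on~$\tilde{\xi}$ only through~$|\tilde{\xi}| = |\xi|$.
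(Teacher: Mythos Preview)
Your proposal is correct and follows exactly the route the paper indicates: the paper omits the proof, stating only that it ``can be obtained via the same argument as above, by utilizing the general identity in Theorem~\ref{thm:bochner} instead of the ones specialized for the case~$V(x)=x$,'' which is precisely what you do. Your remarks on the tempered-distribution reading of the Fourier manipulations and on the bookkeeping of~$V(\xi)$ versus~$\tilde\xi$ are apt refinements, but the core argument matches the paper's intended one.
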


%
%

\section{Fractional Laplacians}
\label{sec:flap}

\subsection{Notation}\label{sec:notation}
Now we
specialize the result of Section~\ref{sec:bochner}
to the case of the fractional Laplacian of order~$s\in(0,1)$, corresponding to the choice~$\psi(r) = r^{2 s}$ in~\eqref{FOU:SI}. To this end, we introduce some further notation.

If~$f$ and~$g$ are functions on~$\R^n$, we write
$$ \langle f,g\rangle_{\R^n}=\int_{\R^n}f(x)g(x)\,dx.$$

We use the notation~$f\in C^\alpha$ for non-integer~$\alpha>0$, meaning
that~$f\in C^{k,\beta}$ where~$\alpha=k+\beta$ with~$k\in\N$ and~$\beta\in(0,1)$.

\subsection{Fractional Laplacian identities and a Harnack inequality}\label{subsec:fl}
We now translate the results of Section~\ref{sec:bochner} into the notation that we have just introduced.

\begin{corollary}
\label{cor:odd:flap}
Let $f$ be a 
Schwartz function on $\R^n$, symmetric with respect to the first variable. Let $\tilde{f}$ be the corresponding 3-isotropic Schwartz function on $\R^{n + 2}$, given by
\[
 \tilde{f}(\tilde{x}) = f\bigg (\sqrt{\tilde{x}_1^2 + \tilde{x}_2^2 + \tilde{x}_3^2}, \tilde{x}_4, \ldots, \tilde{x}_{n + 2} \bigg ) .
\]
Finally, let $g(x) = x_1 f(x)$. Then
\[
 (-\Delta)^s g(x) = x_1 (-\Delta)^s \tilde{f}((x_1, 0, 0), x_2, \ldots, x_n) .
\]
\end{corollary}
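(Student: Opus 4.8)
The plan is to read off Corollary~\ref{cor:odd:flap} from Lemma~\ref{lem:odd:operator} by specializing the abstract operator $L_n$ to the fractional Laplacian. The first step is to check that $(-\Delta)^s$ fits the framework of Section~\ref{sec:bochner}: with the normalization of the Fourier transform used here, $(-\Delta)^s$ acts on Schwartz functions on $\R^m$ as the Fourier multiplier with symbol $\psi(|\xi|) = (2\pi|\xi|)^{2s}$, a radial function of at most polynomial growth. The decisive observation is that this symbol function $\psi$ is the \emph{same} in every dimension, so the companion operator $L_{n+2}$ built from $L_n = (-\Delta)^s$ via~\eqref{FOU:SI} is nothing but the $(n+2)$-dimensional fractional Laplacian $(-\Delta)^s$.

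The second step is purely a bookkeeping of variables. Writing $x = (x_1, x') \in \R \times \R^{n-1}$ and exploiting that $f$ is even in its first variable, one has $f(x_1, x') = f(|x_1|, x')$, so the $3$-isotropic function $\tilde f$ defined in the statement --- namely $\tilde f(\tilde x_1, \tilde x_2, \tilde x_3, x') = f(\sqrt{\tilde x_1^2 + \tilde x_2^2 + \tilde x_3^2}, x')$ --- is precisely the lift $\tilde f(\tilde x, x') = f(|\tilde x|, x')$, with $\tilde x \in \R^3$, appearing in Lemma~\ref{lem:odd:operator}. Since $f$ is Schwartz and symmetric, $\tilde f$ is a Schwartz function on $\R^{n+2}$, its profile function $r \mapsto f(r, x')$ giving rise to a smooth (hence Schwartz) function of $\tilde x$ thanks to the evenness. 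Lemma~\ref{lem:odd:operator} with $L_n = (-\Delta)^s$, applied to $g(x) = x_1 f(x)$, then gives immediately
\[
 (-\Delta)^s g(x) = x_1 (-\Delta)^s \tilde f((x_1, 0, 0), x_2, \ldots, x_n) ,
\]
which is the claim.

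I do not expect any real obstacle here; the only points deserving a sentence are the constant $(2\pi)^{2s}$ in the multiplier, which is harmless since it is dimension-independent and hence identical for $L_n$ and $L_{n+2}$, and the fact that $\tilde f$ inherits the Schwartz property and $3$-isotropy from $f$, both of which are immediate from the explicit formula. Should one prefer not to invoke Lemma~\ref{lem:odd:operator} as a black box, the same conclusion follows by repeating its short proof in this concrete setting: apply Lemma~\ref{lem:odd:fourier} to pass to the Fourier side, multiply by the symbol $(2\pi|\xi|)^{2s}$, use that $|(\xi,\eta)| = |((\xi,0,0),\eta)|$ to recognize it as the $(n+2)$-dimensional symbol evaluated at $((\xi,0,0),\eta)$, and invert the Fourier transform.
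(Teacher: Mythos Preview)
Your proposal is correct and follows essentially the same route as the paper: the paper's proof of Corollary~\ref{cor:odd:flap} is a one-line application of Lemma~\ref{lem:odd:operator} with $\psi(r)=r^{2s}$, noting that this choice makes both $L_n$ and $L_{n+2}$ the fractional Laplacian in their respective dimensions. Your additional remarks on the $(2\pi)^{2s}$ normalization constant and on why $\tilde f$ is Schwartz are fine and do not depart from the paper's argument.
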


\begin{proof} This follows directly from Lemma~\ref{lem:odd:operator}: indeed,
choosing~$\psi(r) = r^{2 s}$ in~\eqref{FOU:SI} gives that 
both~$ L_n$ and~$ L_{n+2}$ are the fractional Laplacians in their respective dimensions
and accordingly, for every~$x=(x_1,x_2,\dots,x_n)\in\R^n$,
\begin{equation*}
(-\Delta)^sg(x)=(-\Delta)^s[x_1 f(x)]=x_1(-\Delta)^s
\tilde f((x_1,0,0),x_2,\dots,x_n)
.\qedhere
\end{equation*}
\end{proof}

We point out that an alternative approach
to the proof of Corollary~\ref{cor:odd:flap} is to rely on
the Caffarelli-Silvestre extension problem
by checking that the extension of the function~$g$ at~$ (x_1, x_2, \dots , x_n, z)$ coincides with~$x_1$ multiplied by the extension of the function~$f$ at~$ (x_1, 0, 0, x_2, \dots , x_n, z)$, where~$z$ denotes the extension variable.

\begin{remark}
The identity in Corollary~\ref{cor:odd:flap} can be alternatively proved by a  direct calculation involving the formula
\begin{align}
 & r \int_{\sph^2} (\alpha^2 + (r z_1 - \beta)^2 + (r z_2)^2 + (r z_3)^2)^{-1 - \gamma} \, \mathcal H^2 (dz) \nonumber \\
 & \qquad = \frac{\pi}{ \beta \gamma} \, \bigl( (\alpha^2 + (r - \beta)^2)^{-\gamma} - (\alpha^2 + (r + \beta)^2)^{-\gamma} \bigr) , \label{WpnIQVKB}
\end{align}
where $\sph^2$ is the unit sphere in $\R^3$ and $\mathcal H^2$ is the surface measure.
One substitutes $\alpha = \sqrt{(x_2 - y_2)^2 + \ldots + (x_n - y_n)^2}$, $\beta = x_1$, $r = |y_1|$, and $\gamma = \tfrac{n}{2} + s$ and then integrates with respect to $y \in \R^n$.

One can prove~\eqref{WpnIQVKB}, for example, via the smooth co-area formula. Indeed, \begin{align*}
   &\hspace{-2em}\int_{\sph^2} (\alpha^2 + (r z_1 - \beta)^2 + (r z_2)^2 + (r z_3)^2)^{-1 - \gamma} \, \mathcal H^2 (dz)\\
   &= \int_{\sph^2} (\alpha^2 + \beta^2+r^2 -2r\beta z_1)^{-1 - \gamma} \, \mathcal H^2 (dz) \\
    &= \int_{-1}^1\int_{\sph^2\cap \{z_1=t\}}(\alpha^2 + \beta^2+r^2 -2r\beta z_1)^{-1 - \gamma}(1-z_1^2)^{-\frac12} \, \mathcal H^1 (dz) \, dt \\
    &= 2\pi \int_{-1}^1(\alpha^2 + \beta^2+r^2 -2r\beta t)^{-1 - \gamma} \, dt \\
    &= \frac\pi{r\beta\gamma} \big [ (\alpha^2 + (r-\beta)^2)^{- \gamma}-(\alpha^2 +(r+\beta)^2 )^{- \gamma} \big ],
\end{align*} as required (alternatively,
instead of
using the co-area formula, one can parametrize $z_1 = t$, $z_2 = \sqrt{1 - t^2} \cos s$, $z_3 = \sqrt{1 - t^2} \sin s$).
\end{remark}

An extension of Corollary~\ref{cor:odd:flap} to arbitrary antisymmetric functions is rather straightforward: in particular, in item~\ref{thm:odd:flap:c} below we consider the extension of the fractional Laplace operator on the class of antisymmetric functions, as described in~\cite{dtv}.

\begin{theorem}
\label{thm:odd:flap}
Suppose that $\Omega$ is an open set of~$\R^n$ and $\delta>0$. 
Let $u$ be a function on $\R^n$ such that $u$ is~$C^{2s + \delta}$ in~$\Omega$ and
antisymmetric, with
\begin{equation}
\int_{\R^n}\frac{|x_1|\,|  u(x)|}{(1 + |x|)^{n + 2 +2s}} \,dx<+\infty. \label{K4ptw5NL}
\end{equation}
Let $\tilde{v}$ be the corresponding function on $\R^{n + 2}$ given by\footnote{Strictly speaking, the function~$\tilde v$ is not defined when~$(\tilde x_1, \tilde x_2, \tilde x_3) = (0, 0, 0)$, however the case of interest in our setting occurs when~$0\in\Omega$ and then the regularity
and antisymmetry of~$u$ yields that
\begin{eqnarray*}&& \lim_{(\tilde x_1, \tilde x_2, \tilde x_3)\to(0,0,0)}\tilde{v}(\tilde{x}) = \lim_{(\tilde x_1, \tilde x_2, \tilde x_3)\to(0,0,0)}\frac{u\big (\sqrt{\tilde{x}_1^2 + \tilde{x}_2^2 + \tilde{x}_3^2}, \tilde{x}_4, \ldots, \tilde{x}_{n + 2} \big )}{\sqrt{\tilde{x}_1^2 + \tilde{x}_2^2 + \tilde{x}_3^2}}\\&&\quad\qquad\qquad=
\lim_{t\to0^+}\frac{u\big (t, \tilde{x}_4, \ldots, \tilde{x}_{n + 2} \big )}{t}=\partial_1
u\big(0,\tilde{x}_4, \ldots, \tilde{x}_{n + 2} \big ).\end{eqnarray*}
We will indulge in this small abuse of notation in the rest of this paper as well.}
\begin{equation}\label{q0wifpojgl5tDD23rt-1}
 \tilde{v}(\tilde{x}) = \frac{u\big (\sqrt{\tilde{x}_1^2 + \tilde{x}_2^2 + \tilde{x}_3^2}, \tilde{x}_4, \ldots, \tilde{x}_{n + 2} \big )}{\sqrt{\tilde{x}_1^2 + \tilde{x}_2^2 + \tilde{x}_3^2}} \, .
\end{equation}
Then:
\begin{enumerate}[label={\rm (\alph*)}]
\item\label{thm:odd:flap:a} if $\Omega_0 = \{x \in \Omega : x_1 \ne 0\}$, then the function $\tilde{v}$ is $C^{2s + \delta}_{\mathrm{loc}}$ in
\[
 \tilde \Omega_0 = \bigg \{\tilde{x} \in \R^{n + 2} : \Big (\sqrt{\tilde{x}_1^2 + \tilde{x}_2^2 + \tilde{x}_3^2}, \tilde{x}_4, \ldots, \tilde{x}_{n + 2}\Big ) \in \Omega_0\bigg \} ;
\]
\item\label{thm:odd:flap:b} we have that
\begin{equation*}
\int_{\R^{n+2}}\frac{|\tilde{v}(\tilde{x})|}{
(1 + |\tilde{x}|)^{n + 2 + 2s}}\,d\tilde x<+\infty;\end{equation*}
\item\label{thm:odd:flap:c} for every $x \in \Omega_0$, we have that\footnote{In general, without the antisymmetry assumption,~\eqref{K4ptw5NL} is not a strong enough assumption on the decay of \(u\) for the fractional Laplacian to be well-defined. To rectify this issue, we have to use the antisymmetric fractional Laplacian, defined according to \cite[Definition 1.2]{dtv}.}
\begin{equation}
\label{eq:odd:flap}
 (-\Delta)^s u(x) = x_1 (-\Delta)^s \tilde{v}((x_1, 0, 0), x_2, \ldots, x_n) ;
\end{equation}
\item\label{thm:odd:flap:d} for an arbitrary $C_c^\infty$ function $g$ on $\Omega$, we have
\begin{equation}
\label{eq:odd:flap:weak}
 \langle u, (-\Delta)^s g \rangle_{\R^n} = (2 \pi)^{-1} \langle \tilde{v}, (-\Delta)^s \tilde{f} \rangle_{\R^{n + 2}} ,
\end{equation}
where
\[
 \tilde f(\tilde x)= \frac{ g_A\big (\sqrt{\tilde{x}_1^2 + \tilde{x}_2^2 + \tilde{x}_3^2}, \tilde{x}_4, \ldots, \tilde{x}_{n + 2} \big )}{\sqrt{\tilde{x}_1^2 + \tilde{x}_2^2 + \tilde{x}_3^2}}
\]
and
\[
g_A(x) = \frac 12 \big (  g(x_1,\dots,x_n) - g(-x_1,x_2,\dots,x_n) \big ) 
\]
is the anti-symmetric part of $g$;
\item\label{thm:odd:flap:e} for an arbitrary $C_c^\infty$ function $\tilde{f}$ on 
\[\tilde \Omega=\bigg \{\tilde{x} \in \R^{n + 2} : \Big (\sqrt{\tilde{x}_1^2 + \tilde{x}_2^2 + \tilde{x}_3^2}, \tilde{x}_4, \ldots, \tilde{x}_{n + 2}\Big ) \in \Omega\bigg \} ,
\]
we have
\begin{equation}
\label{eq:odd:flap:reverse}
\langle \tilde{v}, (-\Delta)^s \tilde{f} \rangle_{\R^{n + 2}} = 2 \pi \langle u, (-\Delta)^s g \rangle_{\R^n} ,
\end{equation}
where $g(x) = x_1 f(x)$ and $f$ is given in terms of the 3-isotropic symmetrization of $\tilde{f}$ by
\begin{equation}\label{ISODEF}
 f(x) = \frac{1}{4 \pi} \int_{\sph^2} \tilde{f}(|x_1| z, x_2, \ldots, x_n)\, \mathcal H^2(dz) ;
\end{equation}
here $\sph^2$ is the unit sphere in $\R^3$ and $\mathcal H^2$ is the surface measure.
\end{enumerate}
\end{theorem}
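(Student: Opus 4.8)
The plan is to treat the five items in an order dictated by their logical dependencies: first establish the regularity and integrability of $\tilde v$ (items (a) and (b)), then prove the pointwise identity (c) via an approximation argument built on Corollary~\ref{cor:odd:flap}, and finally derive the two weak formulations (d) and (e) from the pointwise identity together with a polarization-type symmetry computation. Item (a) is essentially a coordinate-change observation: away from $\{(\tilde x_1,\tilde x_2,\tilde x_3)=0\}$, the map $\tilde x\mapsto(\sqrt{\tilde x_1^2+\tilde x_2^2+\tilde x_3^2},\tilde x_4,\dots,\tilde x_{n+2})$ is smooth with smooth inverse onto its image, and dividing by the smooth nonvanishing function $\sqrt{\tilde x_1^2+\tilde x_2^2+\tilde x_3^2}$ preserves $C^{2s+\delta}_{\mathrm{loc}}$; so $\tilde v$ inherits the regularity of $u$ on $\tilde\Omega_0$. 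For item (b), I would pass to generalized spherical coordinates in the first three variables of $\R^{n+2}$: writing $\tilde x=(\rho\omega, x')$ with $\omega\in\sph^2$, $\rho>0$, $x'\in\R^{n-1}$, one has $d\tilde x=\rho^2\,d\rho\,\mathcal H^2(d\omega)\,dx'$ and $\tilde v(\tilde x)=u(\rho,x')/\rho$, so that
\[
\int_{\R^{n+2}}\frac{|\tilde v(\tilde x)|}{(1+|\tilde x|)^{n+2+2s}}\,d\tilde x
= 4\pi\int_0^\infty\!\!\int_{\R^{n-1}}\frac{|u(\rho,x')|}{\rho}\,\frac{\rho^2\,d\rho\,dx'}{(1+\sqrt{\rho^2+|x'|^2})^{n+2+2s}},
\]
and after symmetrizing $\rho\mapsto|x_1|$ this is comparable to the integral in~\eqref{K4ptw5NL}, because $(1+\sqrt{\rho^2+|x'|^2})\simeq(1+|(\rho,x')|)$ and the extra factor $\rho=|x_1|$ matches the $|x_1|$ weight; the power $n+2+2s$ in dimension $n+2$ is exactly the one that corresponds to $n+2+2s$ with the Jacobian $\rho^2\,d\rho$ replacing $d\rho$.

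For item (c), the idea is to reduce to Corollary~\ref{cor:odd:flap} by mollification. Fix $x\in\Omega_0$; since $x_1\ne0$ the quantity $(-\Delta)^s u(x)$ is a convergent (principal value plus tail) integral under~\eqref{K4ptw5NL} in the antisymmetric sense of~\cite[Definition 1.2]{dtv}. I would write $u=\phi u+(1-\phi)u$ with $\phi\in C_c^\infty$ equal to $1$ near $x$ and supported in $\Omega_0$, symmetrized so that $\phi u$ remains antisymmetric; the first piece, after further mollifying to a Schwartz antisymmetric function, is handled by Corollary~\ref{cor:odd:flap} applied to the symmetric profile $(\phi u)/x_1$, and one checks that the corresponding $3$-isotropic function on $\R^{n+2}$ is precisely $\tilde v$ localized near $(x_1,0,0,x_2,\dots,x_n)$; the tail piece contributes a convergent integral on both sides, and the key computation is that the $n$-dimensional kernel integrated against the antisymmetrized reflection equals, after the change of variables $y_1=\rho z_1$ and integration over $z\in\sph^2$, the $(n+2)$-dimensional kernel — this is exactly the content of the identity~\eqref{WpnIQVKB} in the Remark following Corollary~\ref{cor:odd:flap}. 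Passing to the limit in the mollification, using item~(a) to control the convergence of $(-\Delta)^s$ on the regular piece and dominated convergence on the tail, yields~\eqref{eq:odd:flap}.

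Items (d) and (e) then follow by integrating~\eqref{eq:odd:flap}-type identities against test functions and using the self-adjointness of $(-\Delta)^s$ in each dimension. For (d), given $g\in C_c^\infty(\Omega)$ I would replace $g$ by its antisymmetric part $g_A$ (legitimate since $u$ is antisymmetric, so $\langle u,(-\Delta)^s g\rangle_{\R^n}=\langle u,(-\Delta)^s g_A\rangle_{\R^n}$), write $g_A(x)=x_1 f(x)$ with $f$ symmetric, apply Corollary~\ref{cor:odd:flap} to get $(-\Delta)^s g_A(x)=x_1(-\Delta)^s\tilde f((x_1,0,0),x')$, and then compute $\langle u,(-\Delta)^s g_A\rangle_{\R^n}$ in spherical coordinates exactly as in item~(b): the factor $x_1\cdot x_1=x_1^2$ produced by the two $x_1$'s combines with the Jacobian $\rho^2\,d\rho$ to give $4\pi$ times $\int u(\rho,x')/\rho\cdot(-\Delta)^s\tilde f\cdot\rho^2\,d\rho\,dx'/(4\pi)$, and the constant $(2\pi)^{-1}$ emerges from $\mathcal H^2(\sph^2)=4\pi$ against the normalization. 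Item (e) is the reverse direction: given $\tilde f\in C_c^\infty(\tilde\Omega)$, its $3$-isotropic symmetrization~\eqref{ISODEF} has profile $f$, and applying the already-established identity in the other order recovers~\eqref{eq:odd:flap:reverse} with constant $2\pi$. The main obstacle I anticipate is item~(c): making the mollification argument rigorous near the hyperplane $\{x_1=0\}$ while respecting the \emph{antisymmetric} definition of $(-\Delta)^s$ from~\cite{dtv} (the ordinary principal value need not converge under~\eqref{K4ptw5NL} alone), and verifying that the tail-kernel reduction via~\eqref{WpnIQVKB} is justified in the principal-value sense rather than just for absolutely convergent integrals — this is where one must be careful about the order of the $z\in\sph^2$ integration and the $y_1$ principal value.
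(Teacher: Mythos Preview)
Your plan for items (a), (b), and (d) is essentially identical to the paper's proof, including the spherical-coordinate computation and the reduction of $g$ to its antisymmetric part $g_A$.

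For item (c), your approach diverges from the paper's. You propose a near/far decomposition of $u$ in $\R^n$ together with the explicit kernel identity~\eqref{WpnIQVKB} to handle the tail. The paper instead mollifies and truncates $\tilde v$ directly in $\R^{n+2}$: it sets $\tilde v_k=(\tilde v\ast\eta_{1/k})\eta_k$, defines $u_k(x)=x_1\tilde v_k((x_1,0,0),x_2,\dots,x_n)$, and then observes that the norm identity $\|u\|_{\mathscr A_s(\R^n)}=\|\tilde v\|_{\mathscr L_s(\R^{n+2})}$ (which is exactly the computation in (b)) automatically transfers $\mathscr L_s$-convergence of $\tilde v_k$ to $\mathscr A_s$-convergence of $u_k$. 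This sidesteps the principal-value and antisymmetric-kernel bookkeeping that you correctly flag as the main obstacle in your route: the paper never has to re-examine~\eqref{WpnIQVKB} in a limiting sense, because the Schwartz identity from Corollary~\ref{cor:odd:flap} is applied to $u_k$ and $\tilde v_k$ simultaneously, and convergence on both sides follows from standard stability estimates (in particular~\cite[Lemma~2.3]{dtv}). Your approach is workable but heavier.

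For item (e) there is a genuine gap. You say ``applying the already-established identity in the other order recovers~\eqref{eq:odd:flap:reverse}'', but (d) only produces $3$-isotropic test functions $\tilde f$ from a given $g$, whereas (e) must hold for an \emph{arbitrary} $\tilde f\in C_c^\infty(\tilde\Omega)$. What is missing is the verification that the non-isotropic remainder $\tilde G=\tilde f-\tilde F$ (where $\tilde F$ is the $3$-isotropic projection corresponding to your $f$) satisfies $\langle \tilde v,(-\Delta)^s\tilde G\rangle_{\R^{n+2}}=0$. The paper proves this by observing that $\tilde G$ is orthogonal to every $3$-isotropic function, that $(-\Delta)^s$ preserves the closed subspace of $3$-isotropic functions in $L^2(\R^{n+2})$ and hence also its orthogonal complement, so $(-\Delta)^s\tilde G$ remains orthogonal to the $3$-isotropic function $\tilde v$. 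Without this step, reversing (d) only gives~\eqref{eq:odd:flap:reverse} for $3$-isotropic $\tilde f$, which is not the full claim.
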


\begin{remark}
Note that, in~Theorem~\ref{thm:odd:flap}~\ref{thm:odd:flap:a}, the conclusion $\tilde v\in C^{2s+\delta}_{\mathrm{loc}}(\tilde \Omega_0)$ cannot be relaxed to $\tilde v\in C^{2s+\delta}(\tilde \Omega_0)$. Indeed, consider $\Omega = (0,1) \times \R^{n-1}$ and $u(x) = x_1^{2s+\delta}$ for $x_1\geq 0$ extended antisymmetrically to all of $\R^n$. Then $\Omega_0=\Omega$, $\tilde \Omega_0 =\big (  B_1^3\setminus \{0\}\big ) \times \R^{n-1}$ (here $B^3_1$ denotes the unit ball in $\R^3$), and $u\in C^{2s+\delta}(\Omega_0)$, but \begin{align*}
\tilde v(\tilde x ) = \big ( \tilde x_1^2+\tilde x_2^2+\tilde x_3^2 \big ) ^{\frac{2s+\delta-1} 2 } 
\end{align*} which is not in $C^{2s+\delta} (\tilde \Omega_0)$. 
\end{remark}

\begin{proof}[Proof of Theorem~\ref{thm:odd:flap}]
To prove assertion~\ref{thm:odd:flap:a}, observe that, for all $\tilde x \in \tilde \Omega_0$, \begin{align*}
\sqrt{\tilde x_1^2+\tilde x_2^2+\tilde x_3^2} 
\neq 0 .
\end{align*} Hence, the map from $\tilde \Omega_0 \to \R $ defined by $\tilde x \mapsto \frac 1 {\sqrt{\tilde x_1^2+\tilde x_2^2+\tilde x_3^2}}$, is in $C^\infty_{\mathrm{loc} } (\tilde \Omega_0)$. Moreover, 
\[
\tilde x \mapsto u\Big (\sqrt{\tilde{x}_1^2 + \tilde{x}_2^2 + \tilde{x}_3^2}, \tilde{x}_4, \ldots, \tilde{x}_{n + 2}\Big )
\]
is in $C^{2s+\delta}(\tilde \Omega_0)$ which implies $\tilde v\in C^{2s+\delta}_{\mathrm{loc}}(\tilde \Omega_0)$. This proves~\ref{thm:odd:flap:a}.

Additionally,
\begin{equation}\label{4YintRdt1}\begin{split}&{\mathcal{I}}:=
\int_{\R^{n+2}} \frac{|\tilde{v}(\tilde{x})|}{(1 + |\tilde{x}|)^{n + 2 + 2s} }\, d\tilde x=
\int_{\R^{n+2}}
\frac{ \Bigl|u\Bigl(\sqrt{\tilde{x}_1^2 + \tilde{x}_2^2 + \tilde{x}_3^2}, \tilde{x}_4, \ldots, \tilde{x}_{n + 2}\Bigr)\Bigr|}{\sqrt{\tilde{x}_1^2 + \tilde{x}_2^2 + \tilde{x}_3^2} \;\big(1 + |\tilde{x}|\big)^{n + 2 + 2s}} \,d\tilde x\\& \qquad
= \int_{\R^3} \int_{\R^{n-1}} \frac{ \bigl|u\bigl( |a|,b\bigr)\bigr|}{|a|\;\big(1 + \sqrt{|a|^2+|b|^2}\big)^{n + 2 + 2s}} \,da\, db
,\end{split}\end{equation}
Now we observe that if~$\Phi:\R\to\R$ then, integrating in spherical coordinates, we get
\begin{equation}\label{UT:01}
\int_{\R^3} \Phi(|x|)\,dx = 4\pi\int_0^{+\infty}\tau^2 \Phi(\tau) \,d\tau.
\end{equation}
Now, gathering~\eqref{4YintRdt1} and~\eqref{UT:01}, we see that
\[
{\mathcal{I}}
= {4\pi}\int_0^{+\infty} \int_{\R^{n-1}}\frac{\tau \, |u(\tau,b)|}{\big(1 + \sqrt{|\tau|^2+|b|^2}\big)^{n + 2 + 2s}}\, db\, d\tau 
= {4\pi}\int_{ \R^{n}} \frac{|x_1|\, |u(x)|}{ (1 + |x|)^{ n + 2 + 2s}}\, dx ,
\]
from which~\ref{thm:odd:flap:b} plainly follows.

To prove the claim in~\ref{thm:odd:flap:c}, we notice that if $f(x) = x_1^{-1} u(x)$ is a Schwartz function, item~\ref{thm:odd:flap:c} reduces to Corollary~\ref{cor:odd:flap}. The extension to general $u$ follows by mollification and cutting off. This approximation procedure is not completely standard in our scenario, so we have provided the details.

Let
\begin{align*}
    \mathscr A_s(\R^n) & = \{ u : \R^n \to \R \text{ s.t. } u \text{ is measurable, antisymmetric, and } \| u\|_{  \mathscr A_s(\R^n) }<+\infty \} , \\
    \mathscr L_s(\R^{n + 2}) & = \{ \tilde{v} : \R^{n + 2} \to \R \text{ s.t. } \tilde{v} \text{ is measurable and } \| \tilde{v}\|_{  \mathscr L_s(\R^{n+2}) }<+\infty \} ,
\end{align*}
where
\begin{equation}\label{AAAnorm}
\begin{split}
    \| u\|_{  \mathscr A_s(\R^n) } & ={4\pi} \int_{\R^n}\frac{|x_1|\,|  u(x)|}{(1 + |x|)^{n + 2 +2s}} \,dx , \\
    \| \tilde{v}\|_{\mathscr L_s(\R^{n+2})} & = \int_{\R^{n+2}}\frac{\vert\tilde{v}(\tilde{x})\vert}{(1 + |\tilde{x}|)^{n + 2 +2s}} \,d\tilde{x} .
\end{split}\end{equation}
Let $\eta_\varepsilon$ be a standard mollifier on $\R^{n + 2}$, that is $\eta_1$ is a smooth, nonnegative, compactly supported function with integral $1$, and $\eta_\varepsilon(\tilde x) = \varepsilon^{-n - 2} \eta_1(\tilde x / \varepsilon)$. We have $u \in \mathscr A_s(\R^n)$, and $\tilde v \in \mathscr L_s(\R^{n + 2})$ by~\ref{thm:odd:flap:b}. Define
\begin{align*}
 \tilde{v}_k(\tilde x) & = (\tilde{v} \ast \eta_{1/k})(\tilde x) \eta_k(\tilde x) ,
\end{align*}
and
\begin{align*}
 u_k(x) & = x_1 \tilde{v}_k((x_1, 0, 0), x_2, x_3, \ldots, x_n) .
\end{align*}
Note that $\tilde v_k$ and $u_k$ are smooth and have compact support, so, in particular, they are Schwartz functions. By a standard estimate\footnote{The proof is very similar to the one for standard $L^1$ spaces. When $\tilde v$ is continuous and compactly supported, an explicit calculation gives a bound for $\|\tilde v_k - \tilde v\|_{\mathscr L_s(\R^{n + 2})}$ in terms of the modulus of continuity of $\tilde v$. For a general $\tilde v$, one approximates $\tilde v$ by a continuous, compactly supported function in $\mathscr L_s(\R^{n + 2})$. For the sake of completeness, we have included the proof in Appendix~\ref{ZVk7DrGw}.}, we have $\tilde v_k \in \mathscr L_s(\R^{n + 2})$ and
\begin{align*}
 \|\tilde v_k - \tilde v\|_{\mathscr L_s(\R^{n + 2})} \to 0
\end{align*}
as $k \to \infty$. We have seen in the proof of~\ref{thm:odd:flap:b} that for an arbitrary antisymmetric function $u$ on $\R^n$ and the corresponding function $\tilde v$ on $\R^{n + 2}$, $u \in \mathscr A_s(\R^n)$ if and only if $\tilde v \in \mathscr L_s(\R^{n + 2})$, and in this case $\|u\|_{\mathscr A_s(\R^n)} = \|\tilde v\|_{\mathscr L_s(\R^{n + 2})}$. In particular, since $\tilde v_k \in \mathscr L_s(\R^{n + 2})$, we have $u_k \in \mathscr A_s(\R^n)$, and additionally
\begin{align*}
 \|u_k - u\|_{\mathscr A_s(\R^n)} & = \|\tilde v_k - \tilde v\|_{\mathscr L_s(\R^{n + 2})} .
\end{align*}
Furthermore, since \(x_1^{-1}u \in  C^{2s+\delta}_{\mathrm{loc}}(\Omega_0)\), we have that \(u_k \to u\) in \( C^{2s+\delta}_{\mathrm{loc}}(\Omega_0)\). These two properties and \cite[Lemma 2.3]{dtv} imply that \((-\Delta)^su_k \to (-\Delta)^su\) in \(\Omega_0\). Next, \(\tilde v_k \to v\) in \(C^{2s+\delta}_{\mathrm{loc}}(\tilde \Omega_0)\) using~\ref{thm:odd:flap:a}. Moreover,
\begin{align*}
 \| \tilde v_k - \tilde v\|_{\mathscr L_s(\R^{n+2})} \to 0
\end{align*}
as $k \to \infty$, so standard estimates for the fractional Laplacian imply \((-\Delta)^s\tilde v_k \to (-\Delta)^s\tilde v\) in \(\tilde \Omega_0\). Using~\ref{thm:odd:flap:c} for Schwartz functions $u_k$ and $\tilde v_k$, and combining it with the above two convergence results, we conclude that~\ref{thm:odd:flap:c} holds in the general case.

We now prove assertions~\ref{thm:odd:flap:d} and~\ref{thm:odd:flap:e}. For~\ref{thm:odd:flap:d}, since $\tilde f$ is $3$-isotropic, also $(-\Delta)^s \tilde f$ is $3$-isotropic:
\begin{align*}
(-\Delta)^s \tilde f (\tilde x) &= (-\Delta)^s \tilde f \bigg ( \Big (\sqrt{\tilde x_1^2 + \tilde x_2^2 + \tilde x_3^2} , 0 , 0 \Big ),\tilde x_4, \dots ,\tilde x_{n+2} \bigg )
, 
\end{align*} so Corollary~\ref{cor:odd:flap} implies that \begin{align*}
(-\Delta)^s \tilde f (\tilde x) &= \frac{ (-\Delta)^s g_A \big (\sqrt{\tilde x_1^2 + \tilde x_2^2 + \tilde x_3^2}, \tilde x_4, \ldots, \tilde x_{n+2} \big )} {\sqrt{\tilde x_1^2 + \tilde x_2^2 + \tilde x_3^2}} .
\end{align*} Hence, using the definition of \(\tilde v\), we have  \begin{align*}
& \hspace{-1em} \langle \tilde v , (-\Delta)^s \tilde f \rangle_{\R^{n+2}} \\&= \int_{\R^{n+2}} \frac{u \big (\sqrt{\tilde x_1^2 + \tilde x_2^2 + \tilde x_3^2}, \tilde x_4, \ldots, \tilde x_{n+2} \big ) (-\Delta)^s g_A \big (\sqrt{\tilde x_1^2 + \tilde x_2^2 + \tilde x_3^2}, \tilde x_4, \ldots, \tilde x_{n+2} \big )}{\tilde x _1 ^2+\tilde x _2 ^2+\tilde x _3 ^2 }\, d \tilde x \\
& = \int_{\R^3} \int_{\R^{n-1}} \frac{u(|a|, b) (-\Delta)^s g_A(|a|, b)}{|a|^2}\, db \, da.
\end{align*}
Integration in spherical coordinates leads to
\begin{align*}
\langle \tilde v , (-\Delta)^s \tilde f \rangle_{\R^{n+2}}
&= 4 \pi \int_0^{+\infty} \int_{ \R^{n-1}}  u (\tau,b) (-\Delta)^s g_A (\tau,b) \, d b\,d\tau. 
\end{align*} Then, since both \(u\) and \((-\Delta)^s g_A\) are antisymmetric, \begin{align*}
\langle \tilde v , (-\Delta)^s \tilde f \rangle_{\R^{n+2}} &= 2 \pi \int_{-\infty}^{+\infty}\int_{ \R^{n-1} }  u (\tau, b) (-\Delta)^s g_A (\tau, b) \, d b\,d\tau \\
&= 2 \pi \langle u , (-\Delta)^s g \rangle_{\R^n}. 
\end{align*}

For~\ref{thm:odd:flap:e}, we begin by assuming that~\(\tilde f\) is \(3\)-isotropic and deal with the general case later. Since \(\tilde f\) is \(3\)-isotropic, Corollary~\ref{cor:odd:flap} applies to $\tilde f$, $f$ and $g$: we have
\[
 (-\Delta)^s \tilde{f}(\tilde x) = \frac{(-\Delta)^s g\big (\sqrt{\tilde x_1^2 + \tilde x_2^2 + \tilde x_3^2}, \tilde x_4, \ldots, \tilde x_{n+2} \big )}{\sqrt{\tilde x_1^2 + \tilde x_2^2 + \tilde x_3^2}} \, .
\]
Using spherical coordinates,
\begin{align*}
    &\hspace{-2em}\langle \tilde v , (-\Delta)^s \tilde f \rangle_{\R^{n+2}} \\
    &= \int_{\R^{n+2}} \frac{u \big ( \sqrt{\tilde x_1^2+\tilde x_2^2+\tilde x_3^2},\tilde x_4, \ldots, \tilde x_{n+2}\big ) \, (-\Delta)^s g\big (\sqrt{\tilde x_1^2 + \tilde x_2^2 + \tilde x_3^2}, \tilde x_4, \ldots, \tilde x_{n+2} \big ) }{\tilde x_1^2+\tilde x_2^2+\tilde x_3^2} \, d \tilde x \\
    & = \int_{\R^3} \int_{\R^{n-1}} \frac{u(|a|, b) (-\Delta)^s g(|a|, b)}{|a|^2} \, d b\, d a \\
    &= 4\pi \int_0^{+\infty} \int_{\R^{n-1}} u(\tau, b) (-\Delta)^s g(\tau, b)\,  d b\, d\tau.
\end{align*}
Since both $u$ and $(-\Delta)^s g$ are symmetric, we have
\[
 \langle \tilde v, (-\Delta)^s \tilde f \rangle_{\R^{n+2}} = 2 \pi \int_{-\infty}^{+\infty} \int_{\R^{n-1}} u(\tau, b) (-\Delta)^s g(\tau, b) \, db \, d\tau = 2 \pi \langle u, (-\Delta)^s g \rangle_{\R^n} .
\]

Now, if \(\tilde f\) is not \(3\)-isotropic, let
\[
 \tilde F(\tilde x) = \frac{1}{4 \pi} \int_{\sph^2} \tilde{f}\Big (z \sqrt{\tilde x_1^2 + \tilde x_2^2 + \tilde x_3^2}, \tilde x_4, \ldots, \tilde x_{n+2} \Big ) \, \mathcal H^2(dz)
\]
be the $3$-isotropic projection of $\tilde f$. By the definition of $f$,
\[
 \tilde F(\tilde x) = f\Big (\sqrt{\tilde x_1^2 + \tilde x_2^2 + \tilde x_3^2}, \tilde x_4, \ldots, \tilde x_{n+2} \Big ) ,
\]
and so, since we already proved the result for $3$-isotropic functions, we have
\[
 \langle \tilde v, (-\Delta)^s \tilde F \rangle_{\R^{n+2}} = 2 \pi \langle u, (-\Delta)^s g \rangle_{\R^n} .
\]
Therefore, it remains to prove that
\begin{equation}
\label{eq:orthogonal}
 \langle \tilde v, (-\Delta)^s (\tilde f - \tilde F) \rangle_{\R^{n+2}} = 0 .
\end{equation}
Denote $\tilde G = \tilde f - \tilde F$. Then, by the definition of $\tilde F$, 
\[
 \int_{\sph^2} \tilde G(\tau z, \tilde x_4, \ldots, \tilde x_{n+2}) \, \mathcal H^2(dz) = 0
\]
for every $\tau > 0$ and $(\tilde x_4, \ldots, \tilde x_{n+2}) \in \R^{n-1}$. Thus, $\tilde G$ is orthogonal to every $3$-isotropic function: if $\tilde u$ is $3$-isotropic, then integration in spherical coordinates gives
\begin{align*}
 \langle \tilde u, \tilde G\rangle & = \int_{\R^3} \int_{\R^{n-1}} \tilde u(a, b) \tilde G(a, b) \, db \, da \\
 & = \int_{\R^3} \int_{\R^{n-1}} \tilde u((|a|, 0, 0), b) \tilde G(a, b) \, db \, da \\
 & = \int_0^{+\infty} \int_{\sph^2} \int_{\R^{n-1}} \tau^2 \tilde u((\tau, 0, 0), b) \tilde G(\tau z, b) \, db \, \mathcal H^2(dz) \, d\tau \\
 & = \int_0^{+\infty} \int_{\R^{n-1}} \tau^2 \tilde u((\tau, 0, 0), b) \biggr(\int_{\sph^2} \tilde G(\tau z, b) \, \mathcal H^2(dz)\biggr) db \, d\tau = 0 .
\end{align*}
We already know that $(-\Delta)^s$ maps $3$-isotropic Schwartz functions into $3$-isotropic functions. By approximation, the class of $3$-isotropic functions in $L^2(\R^{n+2})$ forms an invariant subspace of $(-\Delta)^s$. Therefore, also its orthogonal complement in $L^2(\R^n)$ is invariant. Since $\tilde G$ is orthogonal to the class of $3$-isotropic functions, we conclude that $(-\Delta)^s \tilde G$ has the same property, and~\eqref{eq:orthogonal} follows.
\end{proof}

In our framework, a natural
application\footnote{When~$s=1$, Theorem~\ref{thm:odd:harnack}
boils down to~\eqref{PRE:eq:odd:harnack}.} of Theorem~\ref{thm:odd:flap} 
leads to the proof of Theorem~\ref{thm:odd:harnack}:

\begin{proof}[Proof of Theorem~\ref{thm:odd:harnack}]
We use
the notation of Theorem~\ref{thm:odd:flap}
and we point out that~$\tilde{v}$ is everywhere nonnegative, due to the assumptions on~$u$ and the definition of~$\tilde v$
in~\eqref{q0wifpojgl5tDD23rt-1}.

By item~\ref{thm:odd:flap:c} of Theorem~\ref{thm:odd:flap}, we have that
\[
 (-\Delta)^s u(x) = x_1 (-\Delta)^s \tilde{v}((x_1, 0, 0), x_2, \ldots, x_n)
\]
for $x \in \Omega_0$, or equivalently\footnote{One can compare~\eqref{TBCW}
with~\eqref{TBCW2}. Namely, \eqref{TBCW2} recovers~\eqref{TBCW} when~$s=1$ with the notation~$y=(\tilde x_1,\tilde x_2,\tilde x_3)$ and
$z=(\tilde x_4,\dots,\tilde x_{n+2})$.}
\begin{equation}
\label{TBCW2}
 \sqrt{\tilde{x}_1^2 + \tilde{x}_2^2 + \tilde{x}_3^2} \, (-\Delta)^s \tilde{v}(\tilde{x}) = (-\Delta)^s u \Big (\sqrt{\tilde x_1^2 + \tilde x_2^2 + \tilde x_3^2}, \tilde x_4, \ldots, \tilde x_{n+2} \Big ) ,
\end{equation}
for $\tilde{x} \in \tilde \Omega_0$ (recall that $(-\Delta)^3 \tilde v$ is $3$-isotropic).

Therefore, if $x = \big  (\sqrt{\tilde x_1^2 + \tilde x_2^2 + \tilde x_3^2}, \tilde x_4, \ldots, \tilde x_{n+2} \big )$, we have
\[
 \sqrt{\tilde{x}_1^2 + \tilde{x}_2^2 + \tilde{x}_3^2} \Bigl( (-\Delta)^s \tilde{v}(\tilde{x}) + c(x) \tilde{v}(\tilde{x}) \Bigr) = (-\Delta)^s u(x) + c(x) u(x) = 0 ,
\]
that is, setting $\tilde{c}(\tilde{x}) = c(x)$,
\begin{equation}
\label{eq:schroedinger}
 (-\Delta)^s \tilde{v}(\tilde{x}) + \tilde{c}(\tilde{x}) \tilde{v}(\tilde{x}) = 0
\end{equation}
in $\tilde \Omega_0$. In fact, the above identity holds in the weak sense in $\tilde \Omega$: this follows directly from item~\ref{thm:odd:flap:e} of Theorem~\ref{thm:odd:flap}.

Since $\tilde{v}$ is nonnegative in $\R^{n + 2}$, the standard Harnack inequality for weak solutions of the Schrödinger equation~\eqref{eq:schroedinger} (see e.g.~\cite[Theorem~1.1]{MR2825646})
implies that for every compact subset $\tilde{K}$ of $\tilde \Omega$, we have
\[
 \esssup_{\tilde{K}} \tilde{v} \le C(\tilde K, \tilde \Omega, \|\tilde c\|_{L^\infty(\tilde \Omega})) \essinf_{\tilde{K}} \tilde{v} .
\]
Applying this to $\tilde{K} = \big \{\tilde x \in \R^{n+2} : \big (\sqrt{\tilde x_1^2 + \tilde x_2^2 + \tilde x_3^2}, \tilde x_4, \ldots, \tilde x_{n+2} \big ) \in K\big \}$ with $K$ a compact subset of $\Omega$, we obtain
\[
 \esssup_{x \in K} \frac{u(x)}{x_1} \le C(K, \Omega, \|c\|_{L^\infty(\Omega_+)}) \essinf_{x \in K} \frac{u(x)}{x_1} ,
\]
which is equivalent to~\eqref{eq:odd:harnack}.
\end{proof}

\section{More general nonlocal operators}\label{OJSDLN09ryihfjg9reSTRpsdlmvIJJ}
In this section, we generalize the results obtained in the specific setting of the fractional Laplacian to the more general class of operators $L_n$ presented in~\eqref{5XOlUbY7}.

Given an open set \(\Omega \subset \R^n\) and a function in \(u\in C^2_{{\rm{loc}}}(\Omega) \cap L^\infty(\R^n)\), by a standard computation (see, for example, the computation in \cite[p.~9]{BUC}
), \(L_n u(x)\) is defined for all \(x\in \Omega\) and, moreover, $L_nu$ is bounded on compact subsets of $\Omega$
. Furthermore, the Fourier symbol of \(L_n\) is
\begin{equation}\begin{split}\label{Coqkf:FR}
 \Psi_n (\xi) & {} = \int_{\R^n} \big ( 1 - e^{i \xi \cdot x} + \xi \cdot x \chi_{(0,1)}(|x|) \big ) \, \jj_n(|x|) \, dx \\
 &= \int_0^\infty \int_{\sph^{n-1}} \big ( 1
 - e^{i r \xi \cdot \omega } + i r(\xi \cdot \omega ) \chi_{(0,1)}(r) \big ) \jj_n(r) r^{n - 1}  \,  \mathcal H^{n-1}(d\omega) \, dr \\
    &= \frac12 \int_0^\infty \int_{\sph^{n-1}} \big ( 2 - e^{i r \xi \cdot \omega } -e^{-i r \xi \cdot \omega }  \big ) \jj_n(r) r^{n - 1} \, \mathcal H^{n-1}(d\omega) \, dr \\
    &=\int_0^\infty \int_{\sph^{n-1}} \big ( 1 - \cos( r \xi \cdot \omega  ) \big ) \jj_n(r) r^{n - 1} \,  \mathcal H^{n-1}(d\omega) \, dr \\ 
    &= \mathcal H^{n-2}(\sph^{n-2}) \int_0^\infty \int_{-1}^1 \big ( 1 - \cos( r s |\xi| ) \big ) (1 - s^2)^{\frac{n - 3}{2}} \jj_n(r) r^{n - 1} \, ds \, dr \\
    & = (2 \pi)^{n/2} \int_0^\infty \biggl(\frac{1}{2^{n/2 - 1} \Gamma(\frac{n}{2})} - (r |\xi|)^{1 - n/2} J_{n/2 - 1}(r |\xi|) \biggr) r^{n - 1} \jj_n(r) \, dr ,
\end{split}\end{equation}
where $J_\nu$ is the Bessel function of the first kind; see~\cite[Chapter 10]{NIST:DLMF}. Hence, it follows that \(\Psi_n\) is rotationally invariant, that is \(\Psi_n(\xi)=\psi(\vert \xi \vert ) \), with $\psi$ given as the Hankel transform
\begin{align}
 \psi(\tau) = (2 \pi)^{n/2} \int_0^\infty \biggl(\frac{1}{2^{n/2 - 1} \Gamma(\frac{n}{2})} - (r \tau)^{1 - n/2} J_{n/2 - 1}(r \tau)\biggr) r^{n - 1} \jj_n(r) \, dr ,
\label{psidierre000}
\end{align} 
It is no mistake that $\psi$ has no subscript $n$: under our assumptions~\eqref{radialassumption} and~\eqref{nplus2} it turns out that the Fourier symbol $\Psi_{n + 2}$ of the operator $L_{n + 2}$ has the same profile function $\psi$. There are many different ways to prove this fact; for instance, it is a relatively simple consequence of Bochner's relation. A direct proof involving well-known properties of Bessel functions is perhaps even shorter: since, by~\eqref{nplus2}, $\jj_{n}(r) = 2 \pi \int_r^\infty t \,\jj_{n+2}(t) \, dt$, 
if we denote by~$\psi_n$ the quantity in~\eqref{psidierre000},
we have
\begin{align*}
 \psi_{n}(\tau) & = (2 \pi)^{n/2 + 1} \int_0^\infty \int_r^\infty \biggl(\frac{1}{2^{n/2 - 1} \Gamma(\frac{n}{2})} - (r \tau)^{1 - n/2} J_{n/2 - 1}(r \tau)\biggr) r^{n - 1} t \jj_{n + 2}(t) \, dt \, dr \\
 & = (2 \pi)^{n/2 + 1} \int_0^\infty \int_0^t \biggl(\frac{1}{2^{n/2 - 1} \Gamma(\frac{n}{2})} - (r \tau)^{1 - n/2} J_{n/2 - 1}(r \tau)\biggr) r^{n - 1} t \jj_{n + 2}(t) \, dr \, dt \\
 & = (2 \pi)^{n/2 + 1} \int_0^\infty  \biggl(\frac{1}{2^{n/2} \Gamma(\frac{n}{2} + 1)} - (t \tau)^{-n/2} J_{n/2}(t \tau)\biggr) t^{n + 1} \jj_{n + 2}(t) \, dt =\psi_{n+2}(\tau),
\end{align*}
where we have used Fubini's theorem and~\cite[Equation 10.22.1]{NIST:DLMF},
and this observation allows us to drop the index~$n$
and write simply~$\psi$ instead of~$\psi_n$.

Therefore, Bochner's relation and its consequences proved in Section~\ref{sec:bochner} apply to this setting. The previously established theory will give us information about \(L_n\) given information about \(L_{n+2}\), namely that if \(L_{n+2}\) admits the Harnack inequality then \(L_n\) admits the antisymmetric Harnack inequality.


\begin{corollary}
\label{cor:odd:flapBIS}
Let $f$ be a symmetric Schwartz function on $\R^n$. Let $\tilde{f}$ be the corresponding 3-isotropic Schwartz function on $\R^{n + 2}$, given by
\[
    \tilde{f}(\tilde{x}) = f\bigg (\sqrt{\tilde x_1^2 + \tilde x_2^2 + \tilde x_3^2}, \tilde x_4, \ldots, \tilde x_{n+2}\bigg ) .
\]
Finally, let $g(x) = x_1 f(x)$. Then
\[
    L_n g(x) = x_1 L_{n+2} \tilde{f}((x_1, 0, 0), x_2, \ldots, x_n) .
\]
\end{corollary}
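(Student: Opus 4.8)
The plan is to deduce this exactly as Corollary~\ref{cor:odd:flap} was deduced in the fractional case, namely by a direct appeal to Lemma~\ref{lem:odd:operator}. The only thing that needs checking is that the hypotheses of that lemma are in force here: that both $L_n$ and $L_{n+2}$ act on Schwartz functions as Fourier multipliers with the \emph{same} radial profile function $\psi$, so that they are both instances of the operators considered in~\eqref{FOU:SI}.

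First I would recall that, by the computation carried out in~\eqref{Coqkf:FR}, the operator $L_n$ defined through~\eqref{5XOlUbY7} is a Fourier multiplier with rotationally invariant symbol $\Psi_n(\xi)=\psi(|\xi|)$, where $\psi$ is the Hankel transform displayed in~\eqref{psidierre000}; being a L\'evy symbol, $\psi$ grows at most quadratically, hence has the at most polynomial growth required in Section~\ref{sec:bochner}. Next, under assumptions~\eqref{radialassumption} and~\eqref{nplus2} one has $\jj_n(r)=2\pi\int_r^\infty t\,\jj_{n+2}(t)\,dt$, and the Fubini argument combined with the Bessel function identity~\cite[Equation~10.22.1]{NIST:DLMF} carried out just before the statement of this corollary shows that the symbol $\Psi_{n+2}$ of $L_{n+2}$ has the same profile function $\psi$. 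Thus $L_n$ and $L_{n+2}$ fit the framework of~\eqref{FOU:SI} with one and the same $\psi$.

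With this, the conclusion is immediate. Writing $x=(x_1,x')$ with $x_1\in\R$ and $x'=(x_2,\dots,x_n)\in\R^{n-1}$, the symmetry of $f$ with respect to the first variable is precisely the symmetry hypothesis on the first coordinate in Lemma~\ref{lem:odd:fourier}, and $\tilde f$ is a $3$-isotropic Schwartz function on $\R^{n+2}$ (the even one-variable Schwartz profile of $f$ composed with the Euclidean norm on $\R^3$ is again a Schwartz function, and $g(x)=x_1 f(x)$ is Schwartz as well, being a polynomial times a Schwartz function). Hence Lemma~\ref{lem:odd:operator} applies and yields
\[
 L_n[x_1 f(x)] = x_1\, L_{n+2}\tilde f\bigl((x_1,0,0),x_2,\dots,x_n\bigr),
\]
which is the asserted identity for $g$.

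I do not anticipate any genuine obstacle: the mathematical content is entirely carried by Lemma~\ref{lem:odd:operator} and by the identification of the two Fourier symbols, both already established. The only points deserving a word of care are that the profile function of $\Psi_{n+2}$ genuinely coincides with that of $\Psi_n$, which is the Bessel/Fubini computation preceding the statement, and the (routine) observation that $\tilde f$ and $g$ are Schwartz, so that the earlier lemmas apply verbatim.
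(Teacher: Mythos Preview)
Your proposal is correct and follows exactly the paper's approach: invoke Lemma~\ref{lem:odd:operator} after noting that, by the Bessel/Fubini computation preceding the statement, $L_n$ and $L_{n+2}$ share the same radial symbol $\psi$ in~\eqref{FOU:SI}. The paper's proof is essentially a one-line version of what you wrote, with your added remarks (polynomial growth of $\psi$, $\tilde f$ and $g$ being Schwartz) being harmless elaborations.
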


\begin{proof} The proof is a consequence of Lemma~\ref{lem:odd:operator}.
Indeed, recall that the symbol~$\psi$ given in~\eqref{psidierre000}
is rotationally invariant, and operators $L_n$ and $L_{n + 2}$ satisfy formula~\eqref{FOU:SI}.
Accordingly, for every~$x=(x_1,x_2,\dots,x_n)\in\R^n$,
\begin{equation*}
L_ng(x)=L_n(x_1 f(x))=x_1L_{n+2}\tilde f((x_1,0,0),x_2,\dots,x_n),
\end{equation*}
as desired.
\end{proof}

Recall that $L_{n+2}$ maps $3$-isotropic functions to $3$-isotropic functions.
We now provide an extension of Corollary~\ref{cor:odd:flapBIS} to antisymmetric functions. This is the counterpart of Theorem~\ref{thm:odd:flap}
for general operators.

\begin{theorem}
\label{thm:odd:flapBIS}
Suppose that $\Omega$ is an open set of~$\R^n$. Let $u$ be an antisymmetric function on $\R^n$ such that $u\in C^2_{{\rm{loc}}}(\Omega)$, $x_1^{-1}u \in L^\infty(\R^n)$,  and let $\tilde{v}$ be the corresponding function on $\R^{n + 2}$ given by
\begin{equation}\label{IPBN}
 \tilde{v}(\tilde{x}) = \frac{u\big (\sqrt{\tilde x_1^2 + \tilde x_2^2 + \tilde x_3^2}, \tilde x_4, \ldots, \tilde x_{n+2} \big )}{\sqrt{\tilde{x}_1^2 + \tilde{x}_2^2 + \tilde{x}_3^2}} \, .
\end{equation}

Then:
\begin{enumerate}[label={\rm (\alph*)}]
\item\label{thm:odd:flap:aBIS} if $\Omega_0 = \{x \in \Omega : x_1 \ne 0\}$ and 
\[
 \tilde \Omega_0 = \bigg \{\tilde{x} \in \R^{n + 2} : \Big (\sqrt{\tilde x_1^2 + \tilde x_2^2 + \tilde x_3^2}, \tilde x_4, \ldots, \tilde x_{n+2} \Big ) \in \Omega_0\bigg \} ;
\]
then the function $\tilde{v}$ is $C^2_{{\rm{loc}}} (\tilde \Omega_0) \cap L^\infty(\R^{n+2}) $.

\item\label{thm:odd:flap:cBIS} for every $x \in \Omega_0$, we have that
\begin{equation}
\label{eq:odd:flapBIS}
L_nu(x) = x_1 L_{n+2} \tilde{v}((x_1, 0, 0), x_2, \ldots, x_n);
\end{equation}
\item\label{thm:odd:flap:dBIS} for an arbitrary $C_c^\infty$ function $g$ on $\Omega$, we have
\begin{equation}
\label{eq:odd:flap:weakBIS}
 \langle u, L_n g \rangle_{\R^n} = (2 \pi)^{-1} \langle \tilde{v}, L_{n+2} \tilde{f} \rangle_{\R^{n + 2}} ,
\end{equation}
where
\[ \tilde{f}(\tilde{x}) = \frac{g_A\big (\sqrt{\tilde x_1^2 + \tilde x_2^2 + \tilde x_3^2}, \tilde x_4, \ldots, \tilde x_{n+2} \big ) }{\sqrt{\tilde x_1^2 + \tilde x_2^2 + \tilde x_3^2}} \]
and
\[ g_A(x) = \frac 12 \big (  g(x_1,\dots,x_n) - g(-x_1,x_2,\dots,x_n) \big ) ; \]
\item\label{thm:odd:flap:eBIS} for an arbitrary $C_c^\infty$ function $\tilde{f}$ on
\[ \tilde \Omega=\bigg \{\tilde{x} \in \R^{n + 2} : \Big (\sqrt{\tilde x_1^2 + \tilde x_2^2 + \tilde x_3^2}, \tilde x_4, \ldots, \tilde x_{n+2} \Big ) \in \Omega \bigg \}, \]%
we have
\begin{equation}
\label{eq:odd:flap:reverseBIS}
\langle \tilde{v}, L_{n+2}\tilde{f} \rangle_{\R^{n + 2}} = 2 \pi \langle u, L_n g \rangle_{\R^n} ,
\end{equation}
where $g(x) = x_1 f(x)$ and $f$ is given in terms of the 3-isotropic symmetrization of $\tilde{f}$ by
\begin{equation*}
 f(x) = \frac{1}{4 \pi} \int_{\sph^2} \tilde{f}(|x_1| z, x_2, \ldots, x_n)\, \mathcal H^2(dz) ;
\end{equation*}
here $\sph^2$ is the unit sphere in $\R^3$ and $\mathcal H^2$ is the surface measure.
\end{enumerate}
\end{theorem}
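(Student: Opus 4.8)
The plan is to imitate the proof of Theorem~\ref{thm:odd:flap} essentially verbatim, with $L_n$ and $L_{n+2}$ in the roles of the $n$- and $(n+2)$-dimensional fractional Laplacians and with Corollary~\ref{cor:odd:flapBIS} in place of Corollary~\ref{cor:odd:flap}. This is legitimate because, as shown earlier in this section, under~\eqref{radialassumption} and~\eqref{nplus2} the operators $L_n$ and $L_{n+2}$ are Fourier multipliers whose symbols share the same radial profile~$\psi$, so that the whole apparatus of Section~\ref{sec:bochner} is available; in particular $L_{n+2}$ maps $3$-isotropic functions into $3$-isotropic functions. Item~\ref{thm:odd:flap:aBIS} is then handled exactly as item~\ref{thm:odd:flap:a} of Theorem~\ref{thm:odd:flap}: on $\tilde\Omega_0$ the quantity $\tilde x_1^2+\tilde x_2^2+\tilde x_3^2$ stays bounded away from~$0$, so $\tilde x\mapsto(\tilde x_1^2+\tilde x_2^2+\tilde x_3^2)^{-1/2}$ is smooth there while $\tilde x\mapsto u(\sqrt{\tilde x_1^2+\tilde x_2^2+\tilde x_3^2},\tilde x_4,\dots,\tilde x_{n+2})$ lies in $C^2_{\mathrm{loc}}(\tilde\Omega_0)$ because $u\in C^2_{\mathrm{loc}}(\Omega)$, whence $\tilde v\in C^2_{\mathrm{loc}}(\tilde\Omega_0)$; and $\tilde v\in L^\infty(\R^{n+2})$ because $|\tilde v(\tilde x)|=|x_1^{-1}u(x)|$ with $x=(\sqrt{\tilde x_1^2+\tilde x_2^2+\tilde x_3^2},\tilde x_4,\dots,\tilde x_{n+2})$, and $x_1^{-1}u$, being even in~$x_1$ since $u$ is antisymmetric, is bounded on $\R^n$; in fact $\|\tilde v\|_{L^\infty(\R^{n+2})}=\|x_1^{-1}u\|_{L^\infty(\R^n)}$.

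For item~\ref{thm:odd:flap:cBIS} I would first settle the case in which $f:=x_1^{-1}u$ is a symmetric Schwartz function: then $u=x_1f$ is Schwartz and~\eqref{eq:odd:flapBIS} is exactly Corollary~\ref{cor:odd:flapBIS}. The general case is obtained by mollification and cutoff, as in item~\ref{thm:odd:flap:c} of Theorem~\ref{thm:odd:flap}: with $\eta$ a standard mollifier on $\R^{n+2}$ and $\chi_k$ a smooth cutoff equal to~$1$ on $B_k$ and supported in $B_{2k}$, set $\tilde v_k=(\tilde v\ast\eta_{1/k})\chi_k$ and $u_k(x)=x_1\tilde v_k((x_1,0,0),x_2,\dots,x_n)$, so that $u_k$ and $\tilde v_k$ are Schwartz and~\eqref{eq:odd:flapBIS} holds for $(u_k,\tilde v_k)$. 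One then lets $k\to\infty$: on compacta of $\tilde\Omega_0$ one has $\tilde v_k\to\tilde v$ in $C^2$ (by item~\ref{thm:odd:flap:aBIS}), while on all of $\R^{n+2}$ one has $\tilde v_k\to\tilde v$ boundedly and almost everywhere; splitting the integral defining $L_{n+2}$ into a near part, controlled by the $C^2$ convergence through a second-order Taylor estimate, and a far part, controlled by dominated convergence together with the standing condition $\int_{\R^{n+2}\setminus\{0\}}\min\{1,|\tilde y|^2\}\,\jj_{n+2}(|\tilde y|)\,d\tilde y<+\infty$, this yields $L_{n+2}\tilde v_k\to L_{n+2}\tilde v$ locally uniformly on $\tilde\Omega_0$; by the spherical-coordinates identity used in~\eqref{4YintRdt1}, the convergence of $\tilde v_k$ to $\tilde v$ in the tail norm governing $L_{n+2}$ transfers to the analogous convergence of $u_k$ to $u$, so that likewise $L_nu_k\to L_nu$ locally uniformly on $\Omega_0$, and the limit of~\eqref{eq:odd:flapBIS} for $(u_k,\tilde v_k)$ proves it for $(u,\tilde v)$. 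The step I expect to be the main obstacle is precisely this limiting procedure under the weaker hypothesis at hand: the decay condition~\eqref{K4ptw5NL} is no longer available, and one must observe that $x_1^{-1}u\in L^\infty(\R^n)$ — equivalently, boundedness of $\tilde v$ — already suffices to control the tails. On the $L_n$ side this is where~\eqref{nplus2} enters: for antisymmetric $u$ the operator $L_nu$ on the left of~\eqref{eq:odd:flapBIS} is to be understood via the antisymmetric extension of $L_n$ (in the spirit of~\cite{dtv}), and the cancellation produced by antisymmetry turns the — possibly linearly growing — $u$ into a far-field integrand of size $|x_1|\,\jj_{n+2}(|x|)$, which is summable exactly under the standing integrability hypothesis on~$\jj_{n+2}$.

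Finally, items~\ref{thm:odd:flap:dBIS} and~\ref{thm:odd:flap:eBIS} follow exactly as items~\ref{thm:odd:flap:d} and~\ref{thm:odd:flap:e} of Theorem~\ref{thm:odd:flap}. For~\ref{thm:odd:flap:dBIS}: $\tilde f$ is $3$-isotropic, and $L_{n+2}$ preserves $3$-isotropy, so $L_{n+2}\tilde f$ is $3$-isotropic; applying Corollary~\ref{cor:odd:flapBIS} to $g_A$ and its $3$-isotropic partner $\tilde f$ gives $L_{n+2}\tilde f(\tilde x)=(\tilde x_1^2+\tilde x_2^2+\tilde x_3^2)^{-1/2}L_ng_A(\sqrt{\tilde x_1^2+\tilde x_2^2+\tilde x_3^2},\tilde x_4,\dots,\tilde x_{n+2})$, and substituting this together with~\eqref{IPBN} into $\langle\tilde v,L_{n+2}\tilde f\rangle_{\R^{n+2}}$, integrating in spherical coordinates in the first three variables so that the resulting factor $\tau^2$ cancels the $|a|^{-2}$, and using that $u$ and $L_ng_A$ are both antisymmetric, folds the half-line integral into $2\pi\langle u,L_ng\rangle_{\R^n}$. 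For~\ref{thm:odd:flap:eBIS}: when $\tilde f$ is $3$-isotropic the same computation (now with $g$, $f$ and symmetric $u$, $L_ng$) gives the identity; for a general $\tilde f$ one introduces the $3$-isotropic projection $\tilde F(\tilde x)=\frac1{4\pi}\int_{\sph^2}\tilde f(z\sqrt{\tilde x_1^2+\tilde x_2^2+\tilde x_3^2},\tilde x_4,\dots,\tilde x_{n+2})\,\mathcal H^2(dz)$, which by the definition of $f$ equals $f(\sqrt{\tilde x_1^2+\tilde x_2^2+\tilde x_3^2},\tilde x_4,\dots,\tilde x_{n+2})$, so the case already proved gives $\langle\tilde v,L_{n+2}\tilde F\rangle_{\R^{n+2}}=2\pi\langle u,L_ng\rangle_{\R^n}$, and it remains to show $\langle\tilde v,L_{n+2}(\tilde f-\tilde F)\rangle_{\R^{n+2}}=0$. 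This holds, exactly as for~\eqref{eq:orthogonal}, because $\tilde f-\tilde F$ has vanishing average over every sphere in the first three variables, hence is $L^2$-orthogonal to every $3$-isotropic function; since $L_{n+2}$, being a Fourier multiplier with radial symbol, preserves the $3$-isotropic subspace of $L^2(\R^{n+2})$ and, being self-adjoint, its orthogonal complement, $L_{n+2}(\tilde f-\tilde F)$ is again orthogonal to all $3$-isotropic functions in $L^2$; testing against $\tilde v\chi_{B_j}$ and letting $j\to\infty$ (admissible because $L_{n+2}(\tilde f-\tilde F)\in L^1(\R^{n+2})$ while $\tilde v\in L^\infty(\R^{n+2})$) then gives the claim.
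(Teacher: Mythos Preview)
Your proposal is correct and follows essentially the same approach as the paper, whose proof handles item~\ref{thm:odd:flap:aBIS} directly and then simply refers back to the proof of Theorem~\ref{thm:odd:flap} for items~\ref{thm:odd:flap:cBIS}--\ref{thm:odd:flap:eBIS}. Your more detailed treatment of the mollification/cut-off limit in~\ref{thm:odd:flap:cBIS} (tail control via $x_1^{-1}u\in L^\infty$ in place of the weighted norm~\eqref{AAAnorm}) and of the orthogonality step in~\ref{thm:odd:flap:eBIS} (testing against $\tilde v\chi_{B_j}$) fills in points the paper explicitly leaves out.
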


\begin{proof}
Since the map $\tilde x \mapsto \big (\sqrt{\tilde x_1^2 + \tilde x_2^2 + \tilde x_3^2}, \tilde x_4, \ldots, \tilde x_{n+2} \big )$ is smooth from $\tilde \Omega_0$ to $\Omega_0$ and \(x \mapsto x_1^{-1} u(x)\) is in \(C^2_{{\rm{loc}}}(\Omega_0)\), it follows that \(v\in C^2_{{\rm{loc}}}(\tilde \Omega_0)\). Moreover, the assumption \(x_1^{-1}u \in L^\infty(\R^n)\) immediately implies that \(\tilde v \in L^\infty(\R^{n + 2})\), which gives~\ref{thm:odd:flap:aBIS}.

To prove the claim in~\ref{thm:odd:flap:cBIS}, we notice that if $f(x) = x_1^{-1} u(x)$ is a Schwartz function, item~\ref{thm:odd:flap:cBIS} reduces to Corollary~\ref{cor:odd:flapBIS}. The extension to general $u$ follows by mollification and cutting off. This is essentially the same as the argument applied in the proof of Theorem~\ref{thm:odd:flap}\ref{thm:odd:flap:c}, and so we omit the details.

The proofs of~\ref{thm:odd:flap:dBIS} and~\ref{thm:odd:flap:eBIS} are analogous to the proofs of Theorem~\ref{thm:odd:flap}~\ref{thm:odd:flap:d} and~\ref{thm:odd:flap:e} respectively.
\end{proof}

In preparation for the proof of Theorem~\ref{THM5.55}, we now give the definition of an operator satisfying the Harnack inequality.

\begin{definition} \label{mTbTJpHm}
Let \(\mathcal A\) be the collection of quadruples \(( \Omega,  K,  c,v)\) such that  \( \Omega \subset \R^n\) is open; $K \subset \Omega$ is compact and connected; \( c\in L^\infty(\Omega)\); and \(  v\in C^2_{{\rm{loc}}}( \Omega) \cap L^\infty  (\R^n)\) satisfies, in the weak\footnote{Saying that~\eqref{PDEBIS} is satisfied in the weak sense means here that
$$  \langle v,L_n\phi \rangle_{\R^n}+ \langle c   v,\phi \rangle_{\R^n}=0,$$
for every $C_c^\infty$ function $\phi$ on $\Omega$.

In this setting, results such as Theorem~\ref{thm:odd:flap}~\ref{thm:odd:flap:e}
and Theorem~\ref{thm:odd:flapBIS}~\ref{thm:odd:flap:eBIS} are helpful to check that
a given weak solution gives rise to a weak solution to the operator in higher dimension.} sense, \begin{align}
\label{PDEBIS}
\begin{PDE}
L_n  v + c   v &= 0, &\text{in } \Omega \\
 v &\geq 0, &\text{in } \R^n.
\end{PDE}
\end{align} We say that an operator \(L_n\) given by~\eqref{5XOlUbY7} admits the \emph{Harnack inequality} if, for all \((\Omega,K,c,v)\in \mathcal A\), there holds that \begin{align*}
\inf_ K  v \leq C \sup_K v
\end{align*} for some \(C=C(\Omega,K,\| c\|_{L^\infty(\Omega}) )>0\).
\end{definition}

\begin{remark}
\label{rem:harnack}
When $c = 0$ in~\eqref{PDEBIS}, that is, when $u$ is a harmonic function in $\Omega$ for the operator $L_n$, then Harnack inquality holds under fairly general conditions on the kernel $\jj_n$ of $L_n$ (see~\eqref{5XOlUbY7}). Indeed: Theorem~4.1 in~\cite{GK18} provides a Harnack inequality (in fact: a boundary Harnack inequality; see Remark~1.10(e) in that paper) when $K$ and $\Omega$ are balls, under the following assumptions: $\jj_n$ is positive and nonincreasing (so that the kernel of $L_n$ is isotropic and \emph{unimodal}), and there is a constant $C > 0$ such that $\jj_n(r + 1) \ge C \jj_n(r)$ for $r \ge 1$ (so that in particular $\jj_n(r)$ goes to zero as $r \to \infty$ no faster than some exponential).

By Theorem~1.9 in~\cite{GK18}, the constant in the Harnack inequality is \emph{scale-invariant} if $\jj_n$ is positive and decreasing and it satisfies the following scaling condition:
\[ \frac{\jj_n(r)}{\jj_n(R)} \le C \biggl(\frac{r}{R}\biggr)^{-n - \alpha} \]
for some constants $C > 0$ and $\alpha \in (0, 2)$.

While it is natural to expect that the results described above also hold for non-zero $c$, such extension does not seem to be available in literature. More restrictive results are given in \cite{KIM2022125746,MR3794384,MR4023466} and references therein.
\end{remark}

We have now developed the machinery to give the proof of Theorem~\ref{THM5.55}.

\begin{proof}[Proof of Theorem~\ref{THM5.55}]
From Theorem~\ref{thm:odd:flapBIS}~\ref{thm:odd:flap:cBIS}, we have that \begin{align*}
L_nu(x) &= x_1 L_{n+2}\tilde v((x_1, 0, 0), x_2, \ldots, x_n) 
\end{align*} for all \(x\in \Omega_0\); or equivalently, \begin{equation}\label{APIB}
\sqrt{\tilde x_1^2 + \tilde x_2^2 + \tilde x_3^2  } \, L_{n+2}\tilde v (\tilde x) = L_nu \Big (\sqrt{\tilde x_1^2 + \tilde x_2^2 + \tilde x_3^2}, \tilde x_4, \ldots, \tilde x_{n+2} \Big ) 
\end{equation} for all \(\tilde x \in \tilde \Omega_0\) (recall that $L_{n + 2} u$ is $3$-isotropic). It follows that if we denote $x = (\sqrt{\tilde x_1^2 + \tilde x_2^2 + \tilde x_3^2}, \tilde x_4, \ldots, \tilde x_{n+2})$, then \begin{align*}
\sqrt{\tilde x_1^2 + \tilde x_2^2 + \tilde x_3^2  } \, \big ( L_{n+2}\tilde v (\tilde x) + c(x) \tilde v (\tilde x) \big ) &=  L_nu(x)  + c(x) u(x)=0 .
\end{align*} Setting \(\tilde c (\tilde x) = c(x)\), we have that \begin{align*}
L_{n+2}\tilde v + \tilde c \tilde v = 0 \qquad \text{in } \tilde \Omega_0 . 
\end{align*} In fact, by Theorem~\ref{thm:odd:flapBIS}~\ref{thm:odd:flap:eBIS}, the above equality holds in the weak sense in $\tilde \Omega$. By assumption, \(L_{n+2}\) admits the Harnack inequality, so from the definition applied to the quadruple \((\tilde \Omega, \tilde K , \tilde c, \tilde v)\) with \(\tilde K = \big  \{\tilde x \in \R^{n + 2} : \big (\sqrt{\tilde x_1^2 + \tilde x_2^2 + \tilde x_3^2}, \tilde x_4, \ldots, \tilde x_{n + 2} \big ) \in K \big \}\), we have that \begin{align*}
\sup_{\tilde K } \tilde v \leq C(\tilde \Omega,\tilde K,\|\tilde c\|_{L^\infty(\tilde \Omega)}) \inf_{\tilde K} \tilde v .
\end{align*} We conclude that \begin{align*}
\sup_{ x\in K_0} \frac{u(x)}{x_1} \leq C(\tilde \Omega,\tilde K,\|c\|_{L^\infty(\Omega_+)}) \inf_{x\in K_0}  \frac{u(x) }{x_1} ,
\end{align*} as desired. 
\end{proof}

\section{Additional results: a boundary weak Harnack
inequality for super-solutions and a boundary local boundedness for sub-solutions}\label{sodjvlfORTYHSJDgh}

We recall that in~\cite{dtv}
the antisymmetric boundary
Harnack inequality was obtained as a byproduct of a weak Harnack inequality for super-solutions and a local boundedness for sub-solutions. These results, which are in fact of independent interest,
may also be obtained by the methods showcased in this paper, as sketched here below
(and for this we will use the norm notation in~\eqref{AAAnorm}
and the setting in~\eqref{IPBN}).

Following is the antisymmetric boundary weak Harnack inequality.

\begin{theorem} 
Let \(M\in \R\), \(\rho>0\), and \(c\in L^\infty(B_\rho^+)\). Suppose that \(u\in C^{2s+\alpha}(B_\rho)\cap \mathscr{A}_s(\R^n)\) for some \(\alpha > 0\) with \(2s+\alpha\) not an integer, \(u\) is non-negative in \(\R^n_+\) and satisfies \begin{align*}
(-\Delta)^s u +cu \geqslant -Mx_1 \qquad \text{in } B_\rho^+.
\end{align*}

Then there exists \(C_\rho>0\) depending only on \(n\), \(s\), \(\| c \|_{L^\infty(B_\rho^+)}\), and \(\rho\) such that \begin{align*}
 \| u\|_{  \mathscr A_s(\R^n) }  &\leqslant  C_\rho \left(  \inf_{B_{\rho/2}^+} \frac{u(x)}{x_1} +M \right).
\end{align*}  
\end{theorem}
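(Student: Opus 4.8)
The strategy is the same lift-to-higher-dimension trick used throughout the paper: transfer the antisymmetric weak Harnack inequality in $\R^n$ to an ordinary (interior) weak Harnack inequality in $\R^{n+2}$ for the $3$-isotropic function $\tilde v$ defined in~\eqref{IPBN}. First I would set $\tilde v(\tilde x)=u\big(\sqrt{\tilde x_1^2+\tilde x_2^2+\tilde x_3^2},\tilde x_4,\dots,\tilde x_{n+2}\big)/\sqrt{\tilde x_1^2+\tilde x_2^2+\tilde x_3^2}$, which is nonnegative on $\R^{n+2}$ since $x_1 u(x)\ge 0$, and record that $\tilde v\in\mathscr L_s(\R^{n+2})$ with $\|\tilde v\|_{\mathscr L_s(\R^{n+2})}=\|u\|_{\mathscr A_s(\R^n)}$, exactly as in the proof of Theorem~\ref{thm:odd:flap}\ref{thm:odd:flap:b} (the identity~\eqref{AAAnorm} is engineered so that the $4\pi$ and the Jacobian of spherical coordinates in $\R^3$ cancel).

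**Transferring the differential inequality.** Next I would translate the hypothesis $(-\Delta)^s u+cu\ge -Mx_1$ in $B_\rho^+$ into a statement about $\tilde v$. Writing $g(x)=x_1 f(x)$ with $f=x_1^{-1}u$ and using Corollary~\ref{cor:odd:flap} (in the pointwise form for $C^{2s+\alpha}$ functions, after the mollification/cutoff approximation as in Theorem~\ref{thm:odd:flap}\ref{thm:odd:flap:c}), we get $(-\Delta)^s\tilde v(\tilde x)=\big((-\Delta)^s u\big)\big(\sqrt{\tilde x_1^2+\tilde x_2^2+\tilde x_3^2},\tilde x_4,\dots\big)/\sqrt{\tilde x_1^2+\tilde x_2^2+\tilde x_3^2}$ on the lifted set $\tilde B_\rho^{\,+}=\{\tilde x:(\sqrt{\tilde x_1^2+\tilde x_2^2+\tilde x_3^2},\tilde x_4,\dots)\in B_\rho^+\}$. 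Dividing the hypothesis through by $x_1=\sqrt{\tilde x_1^2+\tilde x_2^2+\tilde x_3^2}>0$ turns $-Mx_1$ into the constant $-M$ and $cu$ into $\tilde c\tilde v$ with $\tilde c(\tilde x)=c(x)$, so that $(-\Delta)^s\tilde v+\tilde c\tilde v\ge -M$ in $\tilde B_\rho^{\,+}$. Since $\tilde v$ is $3$-isotropic, the inequality automatically holds on the full lifted ball: the points of $\tilde B_\rho^{\,+}$, rotated in the first three coordinates, sweep out all of $\{|\tilde x|<\rho\}\setminus\{\tilde x_1=\tilde x_2=\tilde x_3=0\}$, and the removed lower-dimensional set does not affect a weak formulation. (Here one should invoke Theorem~\ref{thm:odd:flap}\ref{thm:odd:flap:e}, or rather its super-solution analogue, to get the inequality in the weak sense across $\{\tilde x_1=\tilde x_2=\tilde x_3=0\}$; testing against nonnegative $3$-isotropic $\tilde f$ and using~\eqref{eq:odd:flap:reverse} preserves the sign of the inequality because $\tilde f\ge 0$ corresponds to $g_A$ of one sign on $x_1>0$.)

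**Applying the classical result and concluding.** Once $\tilde v\ge 0$ on $\R^{n+2}$ solves $(-\Delta)^s\tilde v+\tilde c\tilde v\ge -M$ weakly in $B_\rho\subset\R^{n+2}$, I would apply the standard nonlocal (interior) weak Harnack inequality for super-solutions of the Schrödinger-type equation — e.g.\ the weak Harnack estimate in~\cite{MR2825646} or~\cite[Theorem~2.3]{MK11}, in the form $\|\tilde v\|_{\mathscr L_s(\R^{n+2})}\le C_\rho\big(\inf_{B_{\rho/2}}\tilde v+M\big)$, with $C_\rho$ depending only on $n+2$, $s$, $\|\tilde c\|_{L^\infty(B_\rho)}\le\|c\|_{L^\infty(B_\rho^+)}$, and $\rho$. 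Translating back via $\|\tilde v\|_{\mathscr L_s(\R^{n+2})}=\|u\|_{\mathscr A_s(\R^n)}$ and $\inf_{B_{\rho/2}}\tilde v=\inf_{x\in B_{\rho/2}^+}u(x)/x_1$ (the infimum over the punctured ball equals the infimum over the lifted punctured ball, and the $\{\tilde x_1=\tilde x_2=\tilde x_3=0\}$ slice contributes $\partial_1 u$, which is a limit of the interior values) yields exactly the claimed bound.

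**Main obstacle.** The analytically delicate point is the same one flagged in Theorem~\ref{thm:odd:flap}\ref{thm:odd:flap:c}: $u$ is only $C^{2s+\alpha}$ near $\{x_1=0\}$, not Schwartz, so Corollary~\ref{cor:odd:flap} does not apply directly and one must run the mollify-and-cutoff approximation in the $\mathscr L_s$/$\mathscr A_s$ norms, checking that $(-\Delta)^s u_k\to(-\Delta)^s u$ locally and that the super-solution inequality (not an equality) survives the limit — the latter is fine because mollification preserves the sign of $(-\Delta)^s u_k+c_k u_k+Mx_1$ on slightly shrunken balls. A secondary subtlety is making precise that the lifted differential inequality genuinely holds in the weak sense across the singular hyperplane $\{\tilde x_1=\tilde x_2=\tilde x_3=0\}$ and that $\tilde v$ has no distributional defect measure there; this is handled exactly as the orthogonality argument~\eqref{eq:orthogonal} in the proof of Theorem~\ref{thm:odd:flap}\ref{thm:odd:flap:e}, since $3$-isotropic test functions see only the regular part of $\tilde v$. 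I would not expect any genuinely new difficulty beyond transcribing these two approximation arguments, since $M$ enters entirely linearly and poses no trouble.
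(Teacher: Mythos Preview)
Your proposal is correct and follows essentially the same route as the paper: lift $u$ to the $3$-isotropic function $\tilde v$ on $\R^{n+2}$, divide the hypothesis by $x_1$ to obtain $(-\Delta)^s\tilde v+\tilde c\,\tilde v\ge -M$ in the $(n+2)$-dimensional ball $\mathcal B_\rho$, apply the standard interior weak Harnack inequality (the paper cites \cite[Theorem~2.2]{MR4023466} rather than \cite{MR2825646} or \cite{MK11}), and translate back using $\|\tilde v\|_{\mathscr L_s(\R^{n+2})}=\|u\|_{\mathscr A_s(\R^n)}$ and $\inf_{\mathcal B_{\rho/2}}\tilde v\le\inf_{B_{\rho/2}^+}u(x)/x_1$. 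The paper's proof is in fact terser than yours and does not spell out the approximation and weak-formulation subtleties you flag as the main obstacle; it simply invokes~\eqref{apdoghAi-2083c4} and the norm identity already established in Theorem~\ref{thm:odd:flap}.
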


\begin{proof} Using~\eqref{APIB} and the notation~$x = (\sqrt{\tilde x_1^2 + \tilde x_2^2 + \tilde x_3^2}, \tilde x_4, \ldots, \tilde x_{n+2})$ we have that, in the $(n+2)$-dimensional ball~${\mathcal{B}}_\rho$ of radius~$\rho$,
\begin{equation}\label{apdoghAi-2083c4} \sqrt{\tilde x_1^2 + \tilde x_2^2 + \tilde x_3^2  } \, (-\Delta)^s\tilde v (\tilde x) = (-\Delta)^su \Big (\sqrt{\tilde x_1^2 + \tilde x_2^2 + \tilde x_3^2}, \tilde x_4, \ldots, \tilde x_{n+2} \Big ) 
\end{equation}
and thus
\begin{align*}
(-\Delta)^s \tilde v +c\tilde v \geqslant -M .
\end{align*}

Then, by the interior  weak Harnack inequality (see \cite[Theorem 2.2]{MR4023466}),
$$\|\tilde v\|_{  \mathscr L_s(\R^{n+2}) } \leqslant C_\rho \left( \inf_{{\mathcal{B}}_{\rho/2}} \tilde v + M \right) .$$
Also, since the $n$-dimensional ball~$B_{\rho/2}$ is contained in~${\mathcal{B}}_{\rho/2}$, we have that
$$  \inf_{{\mathcal{B}}_{\rho/2}} \tilde v\le\inf_{B_{\rho/2}} \tilde v=\inf_{B_{\rho/2}^+} \frac{u(x)}{x_1} .
$$
These observations and the equivalence of the norms in~\eqref{AAAnorm} yield the desired result.
\end{proof}

Here below is the local boundedness for antisymmetric sub-solutions.

\begin{theorem} \label{R5gjD59t}
Let \(M \geqslant 0\), \(\rho\in(0,1)\), and \(c\in L^\infty(B_\rho^+)\). Suppose that \(u \in C^{2s+\alpha}(B_\rho)\cap \mathscr A_s (\R^n)\) for some \(\alpha>0\) with \(2s+\alpha\) not an integer, and \(u\) satisfies  \begin{align*}
(-\Delta)^su +cu &\leqslant M x_1 \qquad \text{in } B_\rho^+.
\end{align*}

Then there exists \(C_\rho>0\) depending only on \(n\), \(s\), \(\| c \|_{L^\infty(B_\rho^+)}\), and \(\rho\) such that  \begin{align*}
\sup_{x\in B_{\rho/2}^+} \frac{ u(x)}{x_1} &\leqslant C_\rho (  \| u\|_{  \mathscr A_s(\R^n) } +M  ) .
\end{align*} 
\end{theorem}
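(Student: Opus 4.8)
The plan is to mirror the argument used in the preceding antisymmetric weak Harnack inequality, transferring the sub-solution property up to dimension $n+2$ via Theorem~\ref{thm:odd:flap}, applying the known interior local boundedness estimate there, and descending back to $\R^n$ using the norm equivalence. First I would record, exactly as in~\eqref{apdoghAi-2083c4}, that with the notation $x = (\sqrt{\tilde x_1^2 + \tilde x_2^2 + \tilde x_3^2}, \tilde x_4, \ldots, \tilde x_{n+2})$ one has
\[
\sqrt{\tilde x_1^2 + \tilde x_2^2 + \tilde x_3^2} \, (-\Delta)^s \tilde v(\tilde x) = (-\Delta)^s u\Big(\sqrt{\tilde x_1^2 + \tilde x_2^2 + \tilde x_3^2}, \tilde x_4, \ldots, \tilde x_{n+2}\Big)
\]
in the $(n+2)$-dimensional ball ${\mathcal{B}}_\rho$, which is a consequence of Theorem~\ref{thm:odd:flap}\ref{thm:odd:flap:c} (and, for the weak formulation, \ref{thm:odd:flap:e}). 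Dividing the differential inequality $(-\Delta)^s u + c u \le M x_1$ in $B_\rho^+$ by $x_1 = \sqrt{\tilde x_1^2 + \tilde x_2^2 + \tilde x_3^2} > 0$ and setting $\tilde c(\tilde x) = c(x)$ yields
\[
(-\Delta)^s \tilde v + \tilde c \,\tilde v \le M \qquad \text{in } {\mathcal{B}}_\rho,
\]
with $\|\tilde c\|_{L^\infty({\mathcal{B}}_\rho)} = \|c\|_{L^\infty(B_\rho^+)}$.

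Next I would invoke the standard interior local boundedness estimate for sub-solutions of the fractional Schrödinger inequality — the companion of the weak Harnack inequality already cited, for instance from~\cite{MR4023466} or~\cite[Theorem~1.1]{MR2825646} — to obtain a bound of the form
\[
\sup_{{\mathcal{B}}_{\rho/2}} \tilde v \le C_\rho\Big( \|\tilde v\|_{\mathscr L_s(\R^{n+2})} + M \Big),
\]
with $C_\rho$ depending only on $n$, $s$, $\|\tilde c\|_{L^\infty({\mathcal{B}}_\rho)}$, and $\rho$. Since the $n$-dimensional half-ball embeds in the $(n+2)$-dimensional ball in the sense that $B_{\rho/2}^+$ corresponds, under $\tilde x \mapsto x$, to points of ${\mathcal{B}}_{\rho/2}$, one has
\[
\sup_{x \in B_{\rho/2}^+} \frac{u(x)}{x_1} = \sup_{B_{\rho/2}} \tilde v \le \sup_{{\mathcal{B}}_{\rho/2}} \tilde v.
\]
Combining this with the equivalence $\|\tilde v\|_{\mathscr L_s(\R^{n+2})} = \|u\|_{\mathscr A_s(\R^n)}$ from~\eqref{AAAnorm} (established in the proof of Theorem~\ref{thm:odd:flap}\ref{thm:odd:flap:b}) delivers the claimed inequality
\[
\sup_{x \in B_{\rho/2}^+} \frac{u(x)}{x_1} \le C_\rho\big( \|u\|_{\mathscr A_s(\R^n)} + M \big).
\]

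The main obstacle — such as it is — is not conceptual but bookkeeping: one must ensure that the hypotheses of Theorem~\ref{thm:odd:flap} are met, i.e.\ that the regularity $u \in C^{2s+\alpha}(B_\rho)$ with $2s+\alpha \notin \Z$ and the integrability $u \in \mathscr A_s(\R^n)$ (equivalently the decay condition~\eqref{K4ptw5NL}) transfer to $\tilde v \in C^{2s+\alpha}_{\mathrm{loc}}(\tilde \Omega_0) \cap \mathscr L_s(\R^{n+2})$, and that the interior local boundedness result one cites genuinely applies to the weak inequality satisfied by $\tilde v$ on ${\mathcal{B}}_\rho$ with the stated dependence of constants. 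Both points are handled exactly as in the proof of the preceding weak Harnack theorem, so the argument is essentially a transcription. Because the reduction is identical in structure, I would present this proof as briefly as that one, pointing to~\eqref{apdoghAi-2083c4}, Theorem~\ref{thm:odd:flap}, and the norm identity in~\eqref{AAAnorm}, and citing the relevant interior local boundedness statement in place of the interior weak Harnack inequality.
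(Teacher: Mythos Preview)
Your overall strategy is exactly the paper's: lift to dimension $n+2$ via~\eqref{apdoghAi-2083c4}, apply an interior local boundedness estimate to $\tilde v$, and descend using the norm identity in~\eqref{AAAnorm}. That part is fine.

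The gap is in the middle step. You invoke a ``standard interior local boundedness estimate'' of the form
\[
\sup_{{\mathcal{B}}_{\rho/2}} \tilde v \le C_\rho\big( \|\tilde v\|_{\mathscr L_s(\R^{n+2})} + M \big),
\]
citing~\cite{MR4023466} or~\cite[Theorem~1.1]{MR2825646}. Neither reference delivers this inequality in the needed form. The paper is explicit about this: it remarks that the norm appearing in~\cite[Theorem~6.2]{MR2825646} is insufficient for the present purpose, and that with zero-order terms $c$ a local boundedness statement controlling the supremum by the weighted tail norm $\|\cdot\|_{\mathscr L_s}$ is hard to locate in the literature. For this reason the paper inserts and proves a bespoke result (Proposition~\ref{RmPesNtb}), obtained by combining a local boundedness estimate from~\cite{kim_holder_2023} with a barrier/absorption argument (subtracting $\tilde M\chi_{B_{4\rho}\setminus B_{2\rho}}$ to kill the right-hand side, then using Young's inequality and an iteration lemma to replace the $L^2$ norm by the weighted $L^1$ norm). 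Without this intermediate proposition your argument does not close: the references you cite either concern the weak Harnack side, or produce the wrong norm on the right-hand side, and the dependence on $\|\tilde v\|_{\mathscr L_s(\R^{n+2})}$ is precisely what is needed to recover $\|u\|_{\mathscr A_s(\R^n)}$ at the end.
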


Before we give the proof of Theorem~\ref{R5gjD59t}, we stress 
that it requires local boundedness for sub-solutions in a ball (with no anti-symmetry assumption). If there are no zero-order terms then this is well-known, see for example \cite[Theorem~5.1]{CS11}. When there are zero-order terms, this is still likely to be well-known to experts, but references are much harder to find. In \cite{kim_holder_2023}, a local boundedness result for sub-solutions with zero-order terms is proved, but it takes a different form to what we would like to use in the proof of Theorem~\ref{R5gjD59t}
(on a similar note, the norm used in~\cite[Theorem~6.2]{MR2825646}
would be insufficient for the proof of Theorem~\ref{R5gjD59t}). Thus, for the sake of clarity, we present below a proof of how to go from the result in \cite{kim_holder_2023} to the exact formulation that we use in Theorem~\ref{R5gjD59t}.

\begin{proposition} \label{RmPesNtb}
    Let \(M > 0\), \(\rho>0\), and \(c \in  L^\infty(B_\rho)\). Suppose that \(u:\R^n\to\R\) with \(u \in C^{2s+\alpha}(B_\rho)\) for some \(\alpha > 0\) with \(2s + \alpha\) not an integer, satisfies \begin{align*}
    \int_{\R^n} \frac{\vert u(x) \vert }{(1+\vert x \vert)^{n+2s}} \, dx<+\infty,
\end{align*} and satisfies \begin{align*}
     (-\Delta)^su + cu \leq  M \qquad \text{in }B_\rho.
\end{align*} Then \begin{align*}
    \sup_{B_{\rho/2}} u \leq C \bigg ( M+ \int_{\R^n} \frac{\vert u(x) \vert }{(1+\vert x \vert)^{n+2s}}  \bigg).
\end{align*} The constant \(C>0\) depends only on \(n\), \(s\), \(\rho\), \(\| c\|_{L^\infty(B_\rho)}\). 
\end{proposition}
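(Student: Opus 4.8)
The plan is to obtain the estimate by starting from the local boundedness result for subsolutions with a zero-order term established in \cite{kim_holder_2023} and then replacing the \emph{local} norm of $u$ that appears on its right-hand side by the global weighted $L^1$-norm $I:=\int_{\R^n}|u(x)|(1+|x|)^{-n-2s}\,dx$ that we want. Via the dilation $u\mapsto u(\rho\,\cdot)$ — which turns $c$ into $\rho^{2s}c(\rho\,\cdot)$, $M$ into $\rho^{2s}M$, changes $I$ into a quantity bounded by $C(\rho)I$, and sends $\sup_{B_{\rho/2}}u$ to $\sup_{B_{1/2}}u(\rho\,\cdot)$ — we may assume $\rho=1$. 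Since $u\in C^{2s+\alpha}(B_1)$ together with the weighted integrability makes $(-\Delta)^su$ well defined pointwise in $B_1$, the function $u$ is in particular a weak (and viscosity) subsolution there, so \cite{kim_holder_2023} applies; in a scaled form (obtained, if necessary, by a routine covering argument) it provides, for every ball $B_r(z)\Subset B_1$, an estimate of the shape
\[
 \sup_{B_{r/2}(z)}u\;\le\; C\Bigl(r^{-n/2}\|u_+\|_{L^2(B_r(z))}+r^{2s}M+r^{2s}\!\!\int_{\R^n\setminus B_r(z)}\!\!\frac{u_+(y)}{|y-z|^{n+2s}}\,dy\Bigr),
\]
with $C$ depending only on $n$, $s$ and $\|c\|_{L^\infty(B_1)}$.

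The core of the argument is an absorption step converting the local $L^2$-term into the global norm $I$. By the elementary interpolation $\|u_+\|_{L^2(B_r(z))}^2\le\|u_+\|_{L^\infty(B_r(z))}\|u_+\|_{L^1(B_r(z))}$ and Young's inequality, for each $\eps\in(0,1]$ the first term above is at most $\eps\sup_{B_r(z)}u_++C_\eps r^{-n}\|u_+\|_{L^1(B_r(z))}$. Setting $\phi(t):=\sup_{B_t}u_+$ and, for $1/2\le t<\sigma\le1$, choosing $r=(\sigma-t)/2$ and taking the supremum over centres $z\in B_t$ (whose balls $B_{r/2}(z)$ cover $B_t$), one obtains
\[
 \phi(t)\;\le\;\eps\,\phi(\sigma)+\frac{C_\eps}{(\sigma-t)^{n}}\,\|u_+\|_{L^1(B_1)}+C\Bigl(M+\sup_{z\in B_{3/4}}\int_{\R^n\setminus B_{(\sigma-t)/2}(z)}\frac{u_+(y)}{|y-z|^{n+2s}}\,dy\Bigr).
\]
Fixing $\eps$ small and applying the classical iteration lemma for functions satisfying $\phi(t)\le\theta\phi(\sigma)+A(\sigma-t)^{-n}+B$ with $\theta<1$ yields $\phi(1/2)\le C\bigl(\|u_+\|_{L^1(B_1)}+M+\text{(tail)}\bigr)$. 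It then remains to bound $\|u_+\|_{L^1(B_1)}\le\int_{B_1}|u|\le 2^{n+2s}I$ and, for $z\in B_{3/4}$ and $|\sigma-t|$ of order one, to control the tail $\int_{\R^n\setminus B_r(z)}u_+(y)|y-z|^{-n-2s}\,dy$ by $I$: splitting at $|y|=2$, on $\{|y|\le2\}$ one uses $|y-z|\ge r$ and $(1+|y|)^{-n-2s}\ge 3^{-n-2s}$, while on $\{|y|>2\}$ one uses $|y-z|\ge|y|-3/4\ge c\,(1+|y|)$; in both regimes the integrand is $\le C\,(1+|y|)^{-n-2s}|u(y)|$, so the tail is $\le C\,I$. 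Undoing the dilation gives the claim with $C=C(n,s,\rho,\|c\|_{L^\infty(B_\rho)})$, using finally $\sup_{B_{\rho/2}}u\le\sup_{B_{\rho/2}}u_+$.

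The step I expect to demand the most care is this absorption/iteration procedure: one must arrange the covering of $B_t$ by balls $B_r(z)$ so that their doubles remain inside $B_\sigma$ and the iteration lemma applies with a constant genuinely independent of the centre $z$ and of $t,\sigma$, and one must track how $\|c\|_{L^\infty}$ and $M$ enter after the dilation so that the final constant depends on $\rho$ only through explicit powers. The comparison of the nonlocal tail with $I$ is routine but slightly delicate near $\partial B_1$, where $|y-z|$ may be small while $1+|y|$ is bounded below — this is exactly why the tail must be kept over $\R^n\setminus B_r(z)$ rather than over $\R^n\setminus B_1$. If, on the other hand, the estimate available from \cite{kim_holder_2023} already features a local $L^1$-norm (or an $L^1$-type tail), the interpolation/absorption step becomes unnecessary and the argument reduces to the two norm comparisons above.
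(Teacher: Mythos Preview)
Your approach is essentially the one taken in the paper: apply the local-boundedness result of \cite{kim_holder_2023} (which yields a bound by an $L^2$ local term plus a tail), interpolate $\|u_+\|_{L^2}^2\le\|u_+\|_{L^\infty}\|u_+\|_{L^1}$ and use Young's inequality to produce $\eps\sup u$, then invoke the standard iteration lemma (the paper cites \cite[Lemma~B.1]{cabre_bernstein_2022} for this) to absorb the $\eps\sup u$ term, and finally bound the remaining local $L^1$ and tail by the weighted norm $I$.

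The one structural difference worth noting is the treatment of $M$. Since Theorem~1.1 in \cite{kim_holder_2023} is stated for the homogeneous inequality $(-\Delta)^su+cu\le0$, the paper first removes $M$ by setting $v=u-\tilde M\,\chi_{B_{4\rho}\setminus B_{2\rho}}$ with $\tilde M=C^{-1}M\rho^{2s}$: this modification lives outside $B_\rho$, so it does not change the equation except through the nonlocal term, and the negative fractional Laplacian of the added bump exactly cancels $M$ in $B_\rho$; the cost $\tilde M$ then reappears only in the tail and is bounded by $C_\rho M$. Your alternative --- carrying $r^{2s}M$ through a scaled version of the cited estimate --- is also fine once you check (or rederive) that the result of \cite{kim_holder_2023} tolerates a nonzero right-hand side. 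Either way the proof runs identically from that point on.
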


\begin{proof}
   
  Let \(v=u- \tilde M \chi_{B_{4\rho}\setminus B_{2\rho}}\) with \(\tilde M\) a constant to be chosen later. Then, we have that \begin{align*}
        (-\Delta)^sv+cv&\leq M-\tilde M (-\Delta)^s\chi_{B_{4\rho}\setminus B_{2\rho}} \qquad \text{in }B_\rho  
    \end{align*} Furthermore, for \(x\in B_\rho\) \begin{align*}
        (-\Delta)^s \chi_{B_{4\rho}\setminus B_{2\rho}}(x) &= -C\int_{B_{4\rho}\setminus B_{2\rho}} \frac{d y }{\vert x - y \vert^{n+2s}} \geq  - C \rho^{-2s}. 
    \end{align*} Hence, \begin{align*}
          (-\Delta)^sv+cv&\leq M-C \tilde M\rho^{-2s} = 0 \quad \text{in }B_\rho
    \end{align*} with \(\tilde M = C^{-1} M\rho^{2s}\).

    Next, we use \cite[Theorem 1.1]{kim_holder_2023} with \(v\) to obtain \begin{align*}
        \sup_{B_{\rho /2}} v \leq C \bigg ( \rho^{2s} \int_{\R^n\setminus B_{\rho/2}} \frac{\vert v(y)\vert}{\vert y \vert^{n+2s}} \, dy + \rho^{-n} \| v\|_{L^2(B_{\rho})} \bigg ).   
    \end{align*} Fix \(\varepsilon>0\). By Young's inequality, \begin{align*}
        \bigg ( \int_{B_\rho} v^2 \, dx \bigg )^{\frac 12}  \leq (\sup_{B_\rho} v )^{\frac12} \bigg (\int_{B_\rho} \vert v\vert  \, dx \bigg )^{\frac12} \leq \varepsilon( \sup_{B_\rho} u )+ \frac C{\varepsilon} \int_{B_\rho} \vert v \vert \, dx , 
    \end{align*} so \begin{align*}
        \sup_{B_{\rho /2}} v &\leq C_\rho \bigg (  \varepsilon  ( \sup_{B_\rho} v ) +  \int_{\R^n\setminus B_{\rho/2}} \frac{\vert v(y)\vert}{\vert y \vert^{n+2s}} \, dy+ \frac 1{\varepsilon} \int_{B_\rho} \vert v \vert \, dx\bigg ) \\
        &\leq C_\rho \bigg (  \varepsilon  ( \sup_{B_\rho} v ) + C_\varepsilon \int_{\R^n} \frac{\vert v(x) \vert }{(1+\vert x \vert)^{n+2s}}  \bigg ). 
    \end{align*} Then \cite[Lemma B.1]{cabre_bernstein_2022} implies \begin{align*}
        \sup_{B_{\rho /2}} v &\leq C_\rho \int_{\R^n} \frac{\vert v(x) \vert }{(1+\vert x \vert)^{n+2s}} .
    \end{align*} Thus, \begin{align*}
         \sup_{B_{\rho /2}} u&= \sup_{B_{\rho /2}} v\\
         &\leq C_\rho \int_{\R^n} \frac{\vert v(x) \vert }{(1+\vert x \vert)^{n+2s}} \\
         &\leq C_\rho \bigg (\int_{\R^n} \frac{\vert u(x) \vert }{(1+\vert x \vert)^{n+2s}} + \tilde M \int_{B_{4\rho}\setminus B_{2\rho}} \frac1{(1+\vert x \vert)^{n+2s}} \bigg )\\
         &\leq C_\rho \bigg ( M +\int_{\R^n} \frac{\vert u(x) \vert }{(1+\vert x \vert)^{n+2s}} \bigg ) .
    \end{align*}
\end{proof}

We now give the proof of Theorem~\ref{R5gjD59t}.

\begin{proof}[Proof of Theorem~\ref{R5gjD59t}] It follows from~\eqref{apdoghAi-2083c4} that~$
(-\Delta)^s\tilde v +c\tilde v \leqslant M $ in the $(n+2)$-dimensional ball~${\mathcal{B}}_\rho$ of radius~$\rho$. Then one can use the Proposition~\ref{RmPesNtb} to deduce that
$$ \sup_{{\mathcal{B}}_{\rho/2}} \tilde v \leqslant C_\rho \left(\|\tilde v\|_{  \mathscr L_s(\R^{n+2}) }+ M \right) $$and the desired result follows as above.
\end{proof}

\section*{Data Availability Statement}
This is a theoretical mathematics paper concerning partial differential equations and, as such, contains no data. We provide this statement only to satisfy the publisher's formal requirements, though we note that, in this context, 
such a statement is obviously redundant.

\section*{Acknowledgments}
We thank the Referees for their useful and interesting comments.

SD has been supported by the Australian Future Fellowship
FT230100333 ``New perspectives on nonlocal equations''.
JT is supported by an Australian Government Research Training Program Scholarship.
EV has been supported by the Australian Laureate Fellowship FL190100081 ``Minimal surfaces, free boundaries and partial differential equations.''
MK has been supported by National Science Centre, Poland, 2023/49/B/ST1/04303.

This research was funded in whole or in part by National Science Centre, Poland, 2023/49/B/ST1/04303. For the purpose of Open Access, the authors have applied a CC-BY public copyright licence to any Author Accepted Manuscript (AAM) version arising from this submission.

\appendix

\section{Approximation of functions in a class of weighted $L^1$ spaces} \label{ZVk7DrGw}
In this section, we give a result that is possibly well-known to experts, but 
perhaps not easy to find explicitly stated and proved in the literature (for example, 
a similar result was claimed to be true in~\cite{SILTHES}
and an explicit proof was recently written in~\cite[page~237, Chapter~12]{STINGABOOK}).
We provide full details for the facility of the reader.

\begin{proposition}
  Let $\eta_\varepsilon$ be a standard mollifier on $\R^n$, that is $\eta_1$ is a smooth, nonnegative function with integral $1$ and support contained in the unit ball, and $\eta_\varepsilon(x) = \varepsilon^{-n } \eta_1(x / \varepsilon)$. Moreover, let \(\alpha>0\) and \(\mu\) be the positive measure on \(\R^n\) given by \begin{align*}
     d \mu = \frac{d x }{(1+\vert x \vert)^\alpha}. 
  \end{align*}If \(u\in  L^1(\mu)\) and \( u ^\varepsilon = u \ast \eta_\varepsilon\) then \begin{align*}
       \| u^\varepsilon -u  \|_{ L^1(\mu)} \to 0
  \end{align*} as \(\varepsilon \to 0^+\).
\end{proposition}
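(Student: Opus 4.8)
The plan is to reduce the claim to the classical density statement that compactly supported continuous functions are dense in $L^1(\mu)$, and then to handle those nice functions by an explicit estimate. Since $\mu$ is a finite measure (indeed $d\mu = (1+|x|)^{-\alpha}\,dx$ with $\alpha>0$ is a priori only locally finite; but for the argument one only needs that $\mu$ restricted to any ball is finite and that $\mu(\R^n\setminus B_R)\to 0$ as $R\to\infty$, which both hold when $\alpha>n$; more carefully, the statement as used in the paper is with $\alpha = n+2s$ or $\alpha = n+2+2s$, so $\mu(\R^n)<+\infty$), we may freely use that $C_c(\R^n)$ is dense in $L^1(\mu)$. The standard three-epsilon trick then applies: given $u\in L^1(\mu)$ and $\eps>0$, pick $w\in C_c(\R^n)$ with $\|u-w\|_{L^1(\mu)}<\eps$, and estimate
\[
 \|u^\eps - u\|_{L^1(\mu)} \le \|u^\eps - w^\eps\|_{L^1(\mu)} + \|w^\eps - w\|_{L^1(\mu)} + \|w - u\|_{L^1(\mu)} .
\]
The third term is $<\eps$ by construction, and the middle term tends to $0$ as $\eps\to 0^+$ because $w^\eps\to w$ uniformly (as $w$ is uniformly continuous) and both are supported in a fixed compact set of finite $\mu$-measure.

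The only genuinely new point is controlling the first term $\|u^\eps - w^\eps\|_{L^1(\mu)}$ \emph{uniformly in} $\eps$ by a constant multiple of $\|u-w\|_{L^1(\mu)}$; for Lebesgue measure this is just Young's inequality, but the weight needs care because convolution does not interact cleanly with $(1+|x|)^{-\alpha}$. The key observation is that there is a constant $C=C(n,\alpha)$, \emph{independent of $\eps\in(0,1)$}, such that for every $y$ with $|y|\le 1$ (in particular for $y$ in the support of $\eta_\eps$, once $\eps\le 1$) one has the elementary inequality
\[
 \frac{1}{(1+|x|)^\alpha} \le \frac{C}{(1+|x-y|)^\alpha}
\]
uniformly in $x$: indeed $|x-y|\le |x|+1$ gives $1+|x-y|\le 2(1+|x|)$, hence $(1+|x|)^{-\alpha}\le 2^\alpha (1+|x-y|)^{-\alpha}$. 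Writing $h = u-w$, we then bound, using Tonelli,
\[
 \|h\ast\eta_\eps\|_{L^1(\mu)} \le \int_{\R^n}\!\int_{\R^n} |h(x-y)|\,\eta_\eps(y)\,\frac{dy\,dx}{(1+|x|)^\alpha} \le 2^\alpha \int_{\R^n} \eta_\eps(y) \int_{\R^n} \frac{|h(x-y)|}{(1+|x-y|)^\alpha}\,dx\,dy = 2^\alpha \|h\|_{L^1(\mu)} ,
\]
since $\int\eta_\eps = 1$. Thus the first term is $\le 2^\alpha\eps$.

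Putting these together, $\limsup_{\eps\to0^+}\|u^\eps-u\|_{L^1(\mu)} \le (2^\alpha+1)\eps$ for every $\eps>0$, which forces the limit to be $0$. The main obstacle is genuinely just the weight-shift inequality above and making sure all constants are $\eps$-independent; the density of $C_c(\R^n)$ in $L^1(\mu)$ and the uniform-convergence argument for continuous compactly supported functions are entirely routine. One should also remark at the outset that, because $\eta_1$ is supported in the unit ball, for $\eps<1$ the support of $\eta_\eps$ lies in $B_1$, which is what licenses the uniform bound $|y|\le 1$ used above; this is the only place the compact support of the mollifier enters.
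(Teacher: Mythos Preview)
Your proof is correct and follows essentially the same route as the paper's: both use density of $C_c(\R^n)$ in $L^1(\mu)$, a direct estimate for continuous compactly supported functions, and the same weight-shift inequality $(1+|x|)^{-\alpha}\le C(1+|x-y|)^{-\alpha}$ for small $|y|$ to show that mollification is a bounded operator on $L^1(\mu)$ uniformly in the mollification parameter (the paper gets the constant $(1+\eps)^\alpha$, you get $2^\alpha$). One cosmetic point to fix: you use the same symbol $\eps$ for both the mollification parameter and the $C_c$-approximation accuracy, which makes the final line ``$\limsup_{\eps\to0^+}\|u^\eps-u\|_{L^1(\mu)}\le(2^\alpha+1)\eps$ for every $\eps>0$'' formally incoherent; introduce a separate letter (the paper uses $1/k$) for the approximation tolerance so that $w$ is fixed before you send the mollification parameter to zero.
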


\begin{proof}
    First, suppose that \(u\) is a continuous, compactly supported function. Then \begin{align*}
      \| u^\varepsilon-u\|_{L^1(\mu)} \leq \int_{\R^n} \int_{\R^n} \frac{\vert u(y)-u(x) \vert \eta_\varepsilon (y-x)}{(1+\vert x \vert)^\alpha}  \,d y \,d x. 
    \end{align*} Choosing \(R>1\) so that \(u(x)=0\) when \(\vert x \vert \geq R\), we have \begin{align*}
       \| u^\varepsilon-u\|_{L^1(\mu)} &\leq \int_{\R^n} \int_{B_{\varepsilon}(x)} \frac{\vert u(y)-u(x) \vert \eta_\varepsilon (y-x)}{(1+\vert x \vert)^\alpha}  \,d y \,d x\\
       &= \int_{B_{2R}} \int_{B_{\varepsilon}(x)} \frac{\vert u(y)-u(x) \vert \eta_\varepsilon (y-x)}{(1+\vert x \vert)^\alpha}  \,d y \,d x,
    \end{align*} where to obtain the second line above we use that if \(\vert x \vert \geq 2R\) and \(\vert x-y\vert < \varepsilon\) then \(\vert y \vert \geq 2 R - \varepsilon > R\) for \(\varepsilon\) small, so \(u(x)=u(y)=0\). 
    
    Next, if \(\omega_u\) denotes the modulus of continuity of \(u\), we have \begin{align*}
          \| u^\varepsilon-u\|_{L^1(\mu)} &\leq \omega_u(\varepsilon) \int_{B_{2R}} \int_{B_{\varepsilon}(x)} \frac{\eta_\varepsilon (y-x)}{(1+\vert x \vert)^\alpha}  \,d y \,d x \\
          &\leq  \omega_u(\varepsilon) \int_{B_{2R}}  \frac{d x }{(1+\vert x \vert)^\alpha}\to 0
    \end{align*} as \(\varepsilon \to 0^+\).

For a general \(u\in L^1(\mu)\), using e.g.~\cite[Theorem~3.14]{MR924157}
we may approximate \(u\) by a continuous, compactly supported function \(u_k\) satisfying \begin{align*}
    \| u- u_k \|_{L^1(\mu)} \leq \frac 1 k .
\end{align*} Then \begin{align*}
     \| u^\varepsilon- (u_k)^\varepsilon \|_{L^1(\mu)}&\leq \int_{\R^n} \int_{B_\varepsilon(x)} \frac{\vert u(x)-u_k(y)\vert \eta_\varepsilon(x-y) }{(1+\vert x \vert)^\alpha} \, dy \, dx .
\end{align*} Since \(1+\vert y \vert \leq (1+\varepsilon) (1+\vert x \vert) \) whenever \(\vert x-y\vert <\varepsilon\), it follows that \begin{align*}
    \| u^\varepsilon- (u_k)^\varepsilon \|_{L^1(\mu)}&\leq (1+\varepsilon)^\alpha \int_{\R^n} \int_{B_\varepsilon(x)} \frac{\vert u(x)-u_k(y)\vert \eta_\varepsilon(x-y) }{(1+\vert y \vert)^\alpha} \, dy \, dx \\
    &= (1+\varepsilon)^\alpha \int_{\R^n} \frac{\vert u(x)-u_k(y)\vert  }{(1+\vert y \vert)^\alpha} \, dy \\
    &\leq \frac{(1+\varepsilon)^\alpha } k . 
\end{align*} Since \begin{align*}
    \| u^\varepsilon - u \|_{L^1(\mu)} &\leq \| u^\varepsilon - (u_k)^\varepsilon \|_{L^1(\mu)}+\| (u_k)^\varepsilon - u_k \|_{L^1(\mu)}+\| u_k - u \|_{L^1(\mu)}\\ &\leq \frac{(1+\varepsilon)^\alpha } k  + \| (u_k)^\varepsilon - u_k \|_{L^1(\mu)}+\frac 1 k 
\end{align*} and \begin{align*}
    \limsup_{\varepsilon \to 0^+ } \vert (u_k)^\varepsilon(x) - u_k(x)\vert = 0,  
\end{align*}we conclude that \begin{align*}
     \limsup_{\varepsilon \to 0^+ } \| u^\varepsilon - u \|_{L^1(\mu)} \leq \frac 2 k 
\end{align*} for all \(k\geq 1\), so \(\displaystyle\lim_{\varepsilon \to 0^+ } \| u^\varepsilon - u \|_{L^1(\mu)}\) exists and equals zero.
\end{proof}

\vfill

\end{document}